\documentclass{amsart}

\usepackage[utf8]{inputenc}
\usepackage{mathrsfs}
\usepackage{amsmath,amssymb,amsfonts,amsthm}
\usepackage{amsthm}
\usepackage{enumerate}
\usepackage{amssymb}
\usepackage[all]{xy}
\usepackage{subcaption}
\usepackage[hidelinks]{hyperref}
\usepackage{latexsym}
\usepackage{tikz-cd}
\usepackage{comment}
\usepackage{yfonts} 
\usepackage{upgreek}
\usepackage{mathtools}
\usetikzlibrary{matrix, arrows}
\usetikzlibrary{decorations.markings}
\usepackage{rotating}
\usepackage{hyperref}
\usepackage{fancyhdr}
\usepackage{tabularx}

\newtheorem{thm}{Theorem}[section]
\newtheorem*{thm*}{Theorem}
\newtheorem{lem}[thm]{Lemma}
\newtheorem{prop}[thm]{Proposition}
\newtheorem{cor}[thm]{Corollary}

\theoremstyle{definition}

\newtheorem{exam}[thm]{Example}
\theoremstyle{remark}
\newtheorem{rmk}[thm]{Remark}

\numberwithin{equation}{section}
\setcounter{tocdepth}{1}

\newcommand{\OO}{\mathcal{O}}

\newcommand{\LL}{\mathcal{L}}

\DeclareMathOperator{\Spec}{Spec}

\DeclareMathOperator{\Hom}{Hom}

\DeclareMathOperator{\EExt}{\mathscr{E}\textit{\kern -1pt {xt}}\,}

\DeclareMathOperator{\Cone}{Cone}
\DeclareMathOperator{\Conv}{Conv}

\DeclareMathOperator{\CDiv}{CDiv}
\DeclareMathOperator{\Div}{Div}
\DeclareMathOperator{\ddiv}{div}
\DeclareMathOperator{\Cl}{Cl}
\DeclareMathOperator{\Pic}{Pic}

\DeclareMathOperator{\codim}{codim}
\DeclareMathOperator{\PGL}{PGL}

\begin{document}

\title{Cohomological vanishing on blown-up projective spaces}

\author{Marco Flores}

\thanks{The author acknowledges support from the Deutsche Forschungsgemeinschaft (DFG, German Research Foundation) under Germany's Excellence Strategy – The Berlin Mathematics Research Center MATH+ (EXC-2046/1, project ID: 390685689).}

\begin{abstract}
By utilizing elementary techniques from toric geometry, we prove sharp cohomological vanishing results for line bundles defined on the blow-up of projective space $\mathbb{P}^n$ at no more than $n+1$ points.
\end{abstract}

\maketitle

\section{Introduction}

Cohomological vanishing theorems are central to modern research in algebraic geometry, particularly within the subject of positivity. A prototypical example of such a theorem is Kodaira's vanishing theorem, which states that if $A$ is an ample divisor on a smooth complex projective variety $X$, then \[H^i(X,\OO_X(K_X+A))=0\] for all $i>0$. Following the original appearance of Kodaira's vanishing theorem in \cite{Kod53}, multiple generalizations of it and related theorems have come into light, the most prominent of which is perhaps the Kawamata--Viehweg vanishing theorem. This theorem was discovered independently by Kawamata \cite{Kaw82} and Viehweg \cite{Vie82}, and it states that Kodaira's vanishing theorem remains true if we only require the divisor $A$ to be big and nef. A rather rich collection of important cohomological vanishing theorems can be found in \cite[Chapter 4]{Laz04}.

In the realm of toric geometry, stronger cohomological vanishing results can be obtained, such as those presented in \cite[Sections 9.2 and 9.3]{CLS11}. Furthermore, there is a basic result in toric geometry (see Theorem \ref{torcoh} below) which compares the sheaf cohomology of a line bundle associated to a toric divisor on a toric variety, with the singular cohomology of a certain subset of Euclidean space, which is constructed following combinatorial data obtained from the toric variety and its toric divisor. This allows one to make very explicit computations of the sheaf cohomology groups of a line bundle associated to a toric divisor on a toric variety.

The aim of this short note is to show that we can apply Theorem \ref{torcoh} to the case of line bundles defined on the blow-up of projective space $\mathbb{P}^n$ at no more than $n+1$ points, in order to obtain sharp cohomological vanishing results. Here, we emphasize the sharpness of the results, as this is usually an unavailable perk in cohomological vanishing theorems in algebraic geometry. In the case of a blow-up at a single point, our result is the following.

\begin{thm*}[Corollary \ref{char1}]
    Let $\pi\colon X\to\mathbb{P}^n$ be a blow-up at one point, with exceptional divisor $E$, and let $a,b\in\mathbb{Z}$. Then
    \[H^1(X,\OO_X(-aE)\otimes\pi^*\OO_{\mathbb{P}^n}(b))=0 \iff a\leq 0 \textnormal{ or } a\leq b+1.\]
\end{thm*}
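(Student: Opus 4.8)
The plan is to treat $X$ as a smooth projective toric variety and compute $H^1$ from Theorem~\ref{torcoh}. One realises $X$ as the star subdivision of the fan of $\mathbb{P}^n$ — which has rays $u_0=-e_1-\cdots-e_n$ and $u_i=e_i$ for $1\le i\le n$ — along the maximal cone $\Cone(u_1,\dots,u_n)$. This introduces a new ray $u_{n+1}=e_1+\cdots+e_n$, whose invariant divisor $D_{n+1}$ is the exceptional divisor $E$; it replaces $\Cone(u_1,\dots,u_n)$ by the maximal cones $\sigma_i=\Cone(u_{n+1},u_j:j\ne i)$ for $1\le i\le n$, and it leaves untouched the maximal cones $\tau_i=\Cone(u_0,u_j:j\ne i)$, $1\le i\le n$, inherited from $\mathbb{P}^n$. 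Since the centre of the blow-up does not lie on the invariant divisor $D_0$, we have $\pi^*\OO_{\mathbb{P}^n}(b)\cong\OO_X(bD_0)$, so the bundle in the statement is $\OO_X(D)$ with $D=bD_0-aD_{n+1}$; writing $D=\sum_\rho a_\rho D_\rho$, this gives $a_0=b$, $a_{n+1}=-a$, and $a_i=0$ for $1\le i\le n$. By Theorem~\ref{torcoh}, for every $m\in\mathbb{Z}^n$ the graded piece $H^1(X,\OO_X(D))_m$ is isomorphic to the reduced singular cohomology $\widetilde{H}^{0}(V_{D,m};\mathbb{C})$, where $V_{D,m}$ is the union, over the cones $\sigma$ of the fan, of the convex hulls $\Conv\{\,u_\rho : \rho\in\sigma(1),\ \langle m,u_\rho\rangle<-a_\rho\,\}$. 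Thus $H^1(X,\OO_X(D))=0$ precisely when $V_{D,m}$ is empty or connected for all $m$, and the problem becomes a connectivity question about these complexes.

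Next I would make the inequalities explicit. With $s:=m_1+\cdots+m_n$, a ray $u_\rho$ enters $V_{D,m}$ exactly in the following cases: $u_0$ does iff $s\ge b+1$; $u_{n+1}$ does iff $s\le a-1$; and, for $1\le i\le n$, $u_i$ does iff $m_i\le-1$. Consequently a top-dimensional simplex coming from $\sigma_i$ or $\tau_i$ appears only when $m_j\le-1$ for every $j\ne i$, i.e.\ only when at most one coordinate of $m$ is nonnegative. I would then run the analysis according to $g:=\#\{i:m_i\le-1\}$. When $g\ge n-1$, a direct inspection of the handful of possible configurations shows $V_{D,m}$ to be empty or connected. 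When $g\le n-2$, no $\sigma_i$ or $\tau_i$ contributes a top simplex, and $V_{D,m}=P_0\cup P_\infty$, where $G:=\{i:m_i\le-1\}$, the simplex $P_0:=\Conv(\{u_0\}\cup\{u_j:j\in G\})$ is present iff $s\ge b+1$, and $P_\infty:=\Conv(\{u_{n+1}\}\cup\{u_j:j\in G\})$ is present iff $s\le a-1$.

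The decisive point is that, since $u_0=-u_{n+1}$, no strongly convex cone of the fan contains both rays, whence $P_0\cap P_\infty=\Conv\{u_j:j\in G\}$. Therefore $V_{D,m}$ fails to be connected exactly when both $P_0$ and $P_\infty$ are present and $G=\emptyset$, that is, when every $m_i\ge0$ and $b+1\le s\le a-1$; in that, and only that, configuration $V_{D,m}=\{u_0,u_{n+1}\}$ is a two-point space, so $H^1(X,\OO_X(D))_m\cong\mathbb{C}$. A lattice point $m$ of this kind — for instance $m=(s,0,\dots,0)$ — exists if and only if there is an integer $s$ with $\max(0,b+1)\le s\le a-1$, which happens if and only if $a\ge1$ and $a\ge b+2$, i.e.\ precisely when neither $a\le0$ nor $a\le b+1$ holds. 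Together with the connectivity of $V_{D,m}$ established in all remaining cases, this yields the stated biconditional.

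I expect the main obstacle to be the bookkeeping in this case analysis — specifically, verifying that in every configuration other than the distinguished two-point one the union $V_{D,m}$ is connected. This relies on the common faces shared by adjacent maximal cones being positive-dimensional, which is where the lowest-dimensional instances call for separate attention, and the unique source of non-vanishing is pinned down by the antipodality $u_0=-u_{n+1}$, which forces $u_0$ and $u_{n+1}$ into different connected components of $V_{D,m}$.
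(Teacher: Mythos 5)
Your proposal is correct and is essentially the paper's own argument: the same star-subdivision model of the blow-up, the same reduction via Theorem~\ref{torcoh} to connectivity of the sets $V_{D,m}$, the same identification of the unique disconnected configuration as the antipodal two-point set (the content of Lemma~\ref{pathconn}), and the same concluding lattice-point count, with your choice of the linearly equivalent representative $bD_{e_0}-aD_{u_0}$ (your $bD_0-aD_{n+1}$) in place of the paper's $(b-a)D_{u_0}+bD_{e_1}$ merely sparing you the change of variables at the end of Corollary~\ref{char1}. The one small inaccuracy is harmless: when neither apex qualifies and $G\neq\emptyset$, $V_{D,m}$ equals $\Conv(u_j\mid j\in G)$ rather than being empty, which is still connected and does not affect the conclusion.
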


With some adaptations, our arguments can be extended to include the cases of blow-ups of $\mathbb{P}^n$ at multiple points, but no more than $n+1$. In this case, our result is as follows.

\begin{thm*}[Theorem \ref{mainthmsev}]
    Let $X\to\mathbb{P}^n$ be a blow-up at $q+1$ points, where $1\leq q\leq n$, with exceptional prime divisors $E_0,\dots, E_q$. Let $a_0,\dots,a_q,b\in\mathbb{Z}$. Then,
    \[H^1\big(X,\OO_X\big(-\sum_{i=0}^qa_iE_i\big)\otimes\pi^*\OO_{\mathbb{P}^n}(b)\big)=0\] if and only if the inequalities
    \[
        a_i+a_j\leq b+1 \ \ \forall i,j\in\{0,\dots,q\}, i\neq j
    \]are satisfied, unless there is exactly one element $a_k\in\{a_0,\dots,a_q\}$ that is positive, in which case we also require $a_k\leq b+1$.
\end{thm*}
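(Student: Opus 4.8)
The plan is to realise $X$ as a toric variety and then read $H^1$ off the combinatorial description provided by Theorem~\ref{torcoh}. After a projective linear change of coordinates we may assume $p_0,\dots,p_q$ are torus-fixed points, so that $X=X_\Sigma$, where $\Sigma$ is obtained from the fan of $\mathbb P^n$ --- which has rays $v_0,\dots,v_n$ with $v_1,\dots,v_n$ a basis of $N$ and $v_0=-(v_1+\dots+v_n)$ --- by star-subdividing the maximal cones $\sigma_i=\Cone(v_j:j\neq i)$ for $i=0,\dots,q$. The ray inserted at $\sigma_i$ is $-v_i$, so $E_i=D_{-v_i}$; writing $\widetilde D_j=D_{v_j}$ for the strict transforms of the coordinate hyperplanes, one has $\pi^*\OO_{\mathbb P^n}(1)=\OO_X\big(\widetilde D_j+\sum_{k\le q,\,k\neq j}E_k\big)$ for each $j$. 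Choosing $j_0\notin\{0,\dots,q\}$ when $q<n$ (and $j_0=0$ when $q=n$), one gets $L\cong\OO_X(D)$ for the torus-invariant divisor $D=b\,\widetilde D_{j_0}+\sum_{k=0}^{q}(b-a_k)E_k$, with the coefficient of $E_0$ replaced by $-a_0$ in the case $q=n$.

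I would then apply Theorem~\ref{torcoh}, which expresses each graded piece $H^1(X,L)_m$ (for $m\in M=\mathbb Z^n$) as the reduced cohomology $\widetilde H^0(V_m;\mathbb C)$ of a subset $V_m\subseteq N_{\mathbb R}\cong\mathbb R^n$ built from $D$ and $m$; thus $H^1(X,L)=0$ if and only if each $V_m$ is connected or empty. Concretely, $V_m$ is the complement in $\mathbb R^n$ of the support of the subfan of all cones whose rays $\rho$ all satisfy $\langle m,u_\rho\rangle\ge -c_\rho$, where $c_\rho$ is the coefficient of $D_\rho$ in $D$; equivalently, $V_m$ is the union of the open stars of the \emph{$m$-bad} rays, those $\rho$ with $\langle m,u_\rho\rangle< -c_\rho$. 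Since the open stars of two rays meet exactly when the rays span a cone of $\Sigma$, the set $V_m$ is disconnected precisely when the $m$-bad rays split into two nonempty parts, no cone of $\Sigma$ containing a ray from each. So the theorem reduces to deciding for which $(a_0,\dots,a_q,b)$ such an $m$ exists.

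The combinatorial core is to classify these disconnecting configurations. From the description of $\Sigma$ one checks that the only pairs of rays spanning \emph{no} cone are the antipodal pairs $\{\widetilde D_i,E_i\}$ and the pairs $\{E_i,E_j\}$ with $i\neq j$ in $\{0,\dots,q\}$ (for $n=2$ there are a few extra pairs among the $\widetilde D_j$, which have to be dealt with separately); in particular, for $n\ge 3$, every $\widetilde D_j$ with $j\notin\{0,\dots,q\}$ spans a cone with every other ray, and no two strict transforms are antipodal. Hence a disconnecting $m$ must leave every $\widetilde D_j$ with $j\notin\{0,\dots,q\}$ good and at most one $\widetilde D$ bad, and then the $m$-bad set has one of two shapes: (a)~all $m$-bad rays are exceptional divisors and there are at least two of them; or (b)~exactly one $\widetilde D_s$ with $s\le q$ is $m$-bad, together with $E_s$, and possibly further $E_i$. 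Translating ``$\widetilde D_j$ bad'' and ``$E_i$ bad'' into inequalities like $\langle m,v_j\rangle\le -1$ and $\langle m,v_i\rangle\ge b-a_i+1$, and using $\sum_{i=0}^n\langle m,v_i\rangle=0$, one finds: an $m$ of shape (a) exists iff $\max(0,b-a_i+1)+\max(0,b-a_j+1)\le b$ for some $i\neq j$; an $m$ of shape (b) exists iff $a_s\ge\max(1,b+2)$ for some $s$, i.e.\ iff some $a_s$ is positive and exceeds $b+1$.

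Finally I would split according to $P:=\{k:a_k>0\}$. If $P=\emptyset$, neither shape can occur, so $H^1(X,L)=0$. If $P=\{k\}$, then (a) cannot occur and (b) occurs iff $a_k>b+1$, so $H^1(X,L)=0\iff a_k\le b+1$, which is the ``unless'' clause. If $|P|\ge 2$, then --- using $\max(0,b-a_i+1)\ge b-a_i+1$, and testing shape (a) on a pair of indices with the two largest values $a_i$ --- one shows that neither (a) nor (b) occurs exactly when $a_i+a_j\le b+1$ for all $i\neq j$, so $H^1(X,L)=0\iff a_i+a_j\le b+1\ \forall\, i\neq j$. The case $q=n$ runs along the same lines, with the coefficient $-a_0$ of $E_0$ in place of $b-a_0$ throughout. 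The step I expect to be the main obstacle is the third one: pinning down exactly which pairs of rays of $\Sigma$ span a common cone --- this is where the geometry (two coordinate hyperplanes whose intersection $\pi$ contracts become disjoint on $X$) forces the low-dimensional cases and the case $q=n$ to be treated separately --- and then carrying out the integral feasibility analysis for $m$ subject to $\sum_i\langle m,v_i\rangle=0$.
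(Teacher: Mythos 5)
Your overall route is the same as the paper's: pass to the toric model, apply Theorem~\ref{torcoh}, classify the disconnected configurations of ``bad'' rays, and settle the resulting integer feasibility problems. Two remarks on the combinatorial core. First, your description of $V_m$ as the union of open stars of the bad rays is a homotopy model, not the set appearing in Theorem~\ref{torcoh}; since that set is literally the induced subcomplex on the bad ray generators, your connectivity criterion (disconnected iff the bad rays split into two parts joined by no cone) is correct, but you should say a word to justify the substitution. Second, your classification is actually more careful than the paper's Lemma~\ref{pathconnsev}: a disconnected bad set containing some $e_s$ may also contain further exceptional rays (e.g.\ $\{e_s,u_s,u_j\}$), which your shape (b) allows but the lemma's conclusion excludes (its step ``for any ray generator $v\neq e_s$ there is a maximal cone containing both $u_s$ and $v$'' fails for $v=u_j$). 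Your feasibility conditions --- shape (a) occurs for some $m$ iff $\max(0,b-a_i+1)+\max(0,b-a_j+1)\le b$ for some $i\neq j$, shape (b) iff some $a_s\ge\max(1,b+2)$ --- are correct, and your reductions in the cases $|P|=1$ and $|P|\ge 2$ do go through (for $|P|=1$ one also needs the observation that $1\le a_k\le b+1$ forces $b\ge 0$, whence the pair inequalities are automatic).

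The genuine problem is the case $P=\emptyset$, where you conclude $H^1=0$ unconditionally and declare this consistent with the statement. Your conclusion is mathematically right --- with $a_0=a_1=0$ the sheaf is $\pi^*\OO_{\mathbb{P}^n}(b)$, and $H^1(X,\pi^*\OO_{\mathbb{P}^n}(b))\cong H^1(\mathbb{P}^n,\OO_{\mathbb{P}^n}(b))=0$; one can also check directly with your shapes that no disconnecting $m$ exists --- but it does \emph{not} agree with the statement you are proving: for $q=1$, $a_0=a_1=0$, $b=-3$ the displayed inequalities $a_i+a_j\le b+1$ fail, so Theorem~\ref{mainthmsev} as written predicts $H^1\neq 0$. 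So you cannot simply assert the match; you must either prove that the pair inequalities hold automatically when all $a_k\le 0$ (false once $b\le -2$) or record that the criterion needs an extra clause ``or no $a_k$ is positive'', the analogue of the clause $a\le 0$ in Corollary~\ref{char1}. In fact your computation exposes the same defect inside the paper's own proof: the assertion there that the shape-(a) system ``admits no solution $m\in M$ if and only if $a_i+a_j\le b+1$'' is correct only in the ``if'' direction (the ``only if'' fails when $b<0$, or more generally when the maxima truncate), and the forward implication of the theorem relies on exactly that false direction. As submitted, then, your proposal establishes a correct vanishing criterion that differs from the stated theorem in the all-nonpositive corner, and the write-up must flag and resolve this discrepancy rather than paper over it in the final case split.
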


Even though our methods and results are elementary, we believe it is interesting that we have been able to produce such characterizations, given that most expositions on this topic focus only on sufficient conditions on an invertible sheaf for it to have vanishing cohomology (cf. \cite[Theorem~1.3]{DP23}). To the best of our knowledge, the necessity of the conditions for cohomological vanishing in Corollary \ref{char1} and Theorem \ref{mainthmsev} had not previously been observed.

The organization of this paper is as follows. In Section~\ref{overview}, we recall some basic definitions and results in toric geometry over the complex numbers. In Section~\ref{onept}, we carry out the elementary exercise of determining the ample cone of a blow-up of $\mathbb{P}^n$ at one point, thence readily obtaining sufficient conditions for cohomological vanishing of invertible sheaves on this blown-up projective space, in all positive degrees, via the Kodaira vanishing theorem. We then prove Corollary \ref{char1}, which is our sharp characterization of cohomological vanishing in degree 1. Finally, in Section~\ref{sevpts}, we adapt the arguments given in the proof of Corollary \ref{char1} in order to treat the case of a blow-up of $\mathbb{P}^n$ at multiple points, not more than $n+1$, resulting in the proof of Theorem \ref{mainthmsev}.

\subsection*{Acknowledgements} We deeply thank Jürg Kramer, for his suggestion to investigate the matters present in this paper, as well as for several useful discussions.

\section{Overview on the basics of toric geometry}\label{overview}
In the present section we recall notation, basic definitions and some theorems from the classical theory of complex toric varieties, with an emphasis on sheaf cohomology and positivity. Our reference for this material is \cite{CLS11}.

\subsection*{Tori and their lattices}

Let $n\in\mathbb{N}$ be a natural number. An $n$-dimensional \textit{torus} is an algebraic group $T$ which is isomorphic to $(\mathbb{C}^*)^n$. A \textit{character} of a torus $T$ is a morphism $\chi\colon T\to\mathbb{C}^*$ of algebraic groups. The set $M$ of characters of an $n$-dimensional torus is a free abelian group of rank $n$, that is, it constitutes a lattice. A \textit{one-parameter subgroup} of a torus $T$ is a morphism $\lambda\colon \mathbb{C}^*\to T$ of algebraic groups. The set $N$ of one-parameter subgroups of an $n$-dimensional torus is also a lattice. Given a character $\chi\in M$ and a one-parameter subgroup $\lambda\in N$, the composition $\chi\circ\lambda\colon \mathbb{C}^*\to\mathbb{C}^*$ is a group homomorphism, so it is given by $t\mapsto t^k$ for some $k\in\mathbb{Z}$. By setting $\langle \chi, \lambda \rangle\coloneqq k$, we obtain a natural bilinear pairing $\langle\,,\rangle\colon  M\times N\to\mathbb{Z}$ which makes $M$ and $N$ dual lattices, that is, it induces isomorphisms $N\cong\Hom(M,\mathbb{Z})$ and $M\cong\Hom(N,\mathbb{Z})$. 

A $\textit{toric variety}$ is a variety $X$ containing a torus $T$ as a Zariski open subset, such that the action by entry-wise multiplication of $T$ on itself extends to an algebraic action of $T$ on $X$.

\subsection*{Cones and fans}
Let $M,N\cong\mathbb{Z}^n$ be dual lattices with a natural bilinear pairing $\langle\,,\rangle\colon  M\times N\to\mathbb{Z}$, which extends to a bilinear pairing of the real vector spaces $M_\mathbb{R}\coloneqq M\otimes_\mathbb{Z}\mathbb{R}$ and $N_\mathbb{R}\coloneqq N\otimes_\mathbb{Z}\mathbb{R}$. A \textit{convex polyhedral cone} in $N_\mathbb{R}$ is a set of the form \[\sigma=\Cone(S)=\Big\{\sum_{u\in S}\lambda_u u\mid\lambda_u\geq 0\Big\}\subseteq N_\mathbb{R},\] where $S\subseteq N_\mathbb{R}$ is finite. We say that $\sigma$ is \textit{generated by} $S$. If $S\subseteq N$, the polyhedral cone $\sigma=\Cone(S)$ is called \textit{rational}.

Let $\sigma$ be a convex polyhedral cone in $N_\mathbb{R}$. We define
\begin{align*}
    \sigma^\perp&\coloneqq\{m\in M_\mathbb{R}\mid\langle m,u\rangle = 0 \text{ for all } u\in\sigma\}\subseteq M_\mathbb{R},\\
    \sigma^\vee&\coloneqq\{m\in M_\mathbb{R}\mid\langle m,u\rangle \geq 0 \text{ for all } u\in\sigma\}\subseteq M_\mathbb{R},
\end{align*}
and we call $\sigma^\vee$ the \textit{dual cone of} $\sigma$. Given $m\in M_\mathbb{R}$, we define
   \[ H_m \coloneqq\{u\in N_\mathbb{R}\mid\langle m,u\rangle =0\}\subseteq N_\mathbb{R},\]
and
\[
    H_{m}^+ \coloneqq\{u\in N_\mathbb{R}\mid\langle m,u\rangle \geq 0\}\subseteq N_\mathbb{R}. \]
We call $H_m$ a \textit{supporting hyperplane of} $\sigma$ if $\sigma\subseteq H_{m}^+$, which is equivalent to the condition that $m\in\sigma^\vee$. A \textit{face} $\tau$ \textit{of} $\sigma$ is the intersection of $\sigma$ with any of its supporting hyperplanes, written $\tau\preceq\sigma$. We denote by $\sigma(r)$ the set of all $r$-dimensional faces of $\sigma$. A face $\tau\neq\sigma$ is called a \textit{proper face}, written $\tau\prec\sigma$. We call $\sigma$ \textit{strongly convex} if $\{0\}\preceq\sigma$. From this point forward, we refer to strongly convex rational polyhedral cones simply as ``cones".

A one-dimensional face $\rho\in\sigma(1)$ of a cone $\sigma$ is called a $\textit{ray}$. Since $\sigma$ is rational, namely, it is generated by lattice vectors in $N$, the semigroup $\rho\cap N$ is generated by a unique element $u_\rho\in\rho\cap N$, which we call the \textit{ray generator of} $\rho$. A cone $\sigma$ is generated by the ray generators of its one-dimensional faces, which are hence called the \textit{minimal generators of} $\sigma$. We say $\sigma$ is \textit{smooth} if its minimal generators are part of a $\mathbb{Z}$-basis of $N$.

A \textit{fan} $\Sigma$ in $N_\mathbb{R}$ is a finite collection of cones in $N_\mathbb{R}$, such that: the face of any cone in $\Sigma$ is also in $\Sigma$, and any two cones in $\Sigma$ intersect along a common face. We denote by $\Sigma(r)$ the set of all $r$-dimensional cones of $\Sigma$. The \textit{support of} a fan $\Sigma$, denoted $\lvert\Sigma\rvert$, is defined as the union of all its cones. We say $\Sigma$ is \textit{complete} if $\lvert\Sigma\rvert=N_\mathbb{R}$, and \textit{smooth} if every cone in $\Sigma$ is smooth.

\subsection*{The toric variety associated to a fan}

Given a cone $\sigma\subseteq N_\mathbb{R}$, the lattice points $\sigma^\vee\cap M$ form a finitely generated semigroup. The associated semigroup algebra $\mathbb{C}[\sigma^\vee\cap M]$ is then an integral domain, finitely generated as a $\mathbb{C}$-algebra, and \[U_\sigma\coloneqq \Spec\mathbb{C}[\sigma^\vee\cap M]\] is an $n$-dimensional normal affine toric variety. Let $\tau=\sigma\cap H_m$ be a face of~$\sigma$, so $m\in\sigma^\vee\cap M$. Then $U_\tau$ is isomorphic to the spectrum of the localization $\mathbb{C}[\sigma^\vee\cap M]_m$, which we denote as $(U_\sigma)_\tau$. 

Let now $\Sigma$ be a fan. For every pair of cones $\sigma_1,\sigma_2\in\Sigma$, we then obtain isomorphisms \[(U_{\sigma_1})_{\sigma_2}\cong U_{\sigma_1\cap\sigma_2}\cong (U_{\sigma_2})_{\sigma_1}.\] By gluing the affine varieties $\{U_\sigma\}_{\sigma\in\Sigma}$ along the aforementioned isomorphisms, we obtain a normal separated toric variety $X_\Sigma$ whose torus is given by $T_N\coloneqq U_{\{0\}}=\Spec\mathbb{C}[M]$. Moreover, every normal separated toric variety arises from a fan in this way \cite[Corollary~3.1.8]{CLS11}. The variety $X_\Sigma$ is complete if and only if $\Sigma$ is a complete fan \cite[Theorem~3.4.6]{CLS11}, and it is smooth if and only if $\Sigma$ is a smooth fan \cite[Theorem~3.1.19]{CLS11}.

\begin{exam}
    Let $N=\mathbb{Z}^n$, with standard basis $\{e_1,\dots,e_n\}$. Set \[e_0\coloneqq-e_1-e_2-\cdots -e_n,\] and let $\Sigma$ be the fan in $N_\mathbb{R}=\mathbb{R}^n$ consisting of the cones generated by all proper subsets of $\{e_0,\dots,e_n\}$. Then $X_\Sigma\cong\mathbb{P}^n$.
\end{exam}

The points in the affine toric variety $U_\sigma$ associated to a cone $\sigma$ are in bijective correspondence with semigroup homomorphisms $\gamma\colon \sigma^\vee\cap M\to(\mathbb{C},\cdot)$. There is a distinguished point in $U_\sigma$, denoted by $\gamma_\sigma$, which corresponds to the semigroup homomorphism
\[ \sigma^\vee\cap M\to(\mathbb{C},\cdot)\] given by the assignment
\[m\mapsto \begin{cases}
      1 & \text{if}\ m\in\sigma^\perp\cap M, \\
      0 & \text{otherwise.}
    \end{cases}\]
The following fundamental theorem of toric geometry, known as the Orbit-Cone Correspondence, shows that a toric variety has a natural stratification by torus orbits, induced by the cones in its associated fan.

\begin{thm}\textnormal{\cite[Theorem~3.2.6]{CLS11}}
    Let $X_\Sigma$ be the toric variety associated to the fan $\Sigma$ in $N_\mathbb{R}$. For each cone $\sigma\in\Sigma$, we denote the $T_N$-orbit of the distinguished point $\gamma_\sigma$ in $X_\Sigma$ by $O(\sigma)$. Then:
    \begin{enumerate}
        \item[(i)] The map

        \[\Sigma\to\{T_N\textnormal{-orbits in } X_\Sigma\},\]

    given by the assignment $\sigma\mapsto O(\sigma)$,
        is a bijection.
        \item[(ii)] We have \[\codim (O(\sigma))=\dim(\sigma).\] 
        \item[(iii)] We have \[U_\sigma=\bigcup_{\tau\preceq\sigma}O(\tau).\]
        \item[(iv)] We have $\tau\preceq\sigma$ if and only if $O(\sigma)\subseteq\overline{O(\tau)}$. Moreover, the equality \[\overline{O(\tau)}=\bigcup_{\tau\preceq\sigma}O(\sigma)\] holds.
    \end{enumerate}
\end{thm}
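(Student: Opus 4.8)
The plan is to prove the theorem first for a single affine chart $U_\sigma$ and then glue. The key is the description of $U_\sigma$ as the set of semigroup homomorphisms $\gamma\colon\sigma^\vee\cap M\to(\mathbb{C},\cdot)$, on which $T_N$ acts by $(t\cdot\gamma)(m)=\chi^m(t)\,\gamma(m)$, where $\chi^m$ is the character attached to $m$. First I would check that for each face $\tau\preceq\sigma$ the distinguished point $\gamma_\tau$ (given by $m\mapsto 1$ if $m\in\tau^\perp\cap M$ and $m\mapsto 0$ otherwise) actually lies in $U_\sigma$: this reduces to the statement that $\sigma^\vee\cap\tau^\perp$ is a face of the cone $\sigma^\vee$ — indeed $\sigma^\vee\cap\tau^\perp=\sigma^\vee\cap H_u$ for $u$ in the relative interior of $\tau$ — so its complement in $\sigma^\vee\cap M$ is an ideal of the semigroup, and $\gamma_\tau$ respects addition. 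Then I would compute $O(\tau)=T_N\cdot\gamma_\tau$ and identify it, as an abstract variety, with the torus $T_{N(\tau)}=\Spec\mathbb{C}[\tau^\perp\cap M]$, where $N(\tau)=N/(N\cap\mathrm{span}(\tau))$; in particular $\dim O(\tau)=n-\dim\tau$, which already gives part (ii).

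The heart of the argument is a support/limit computation showing $U_\sigma=\bigsqcup_{\tau\preceq\sigma}O(\tau)$ as a disjoint union, which yields (iii) and injectivity on a single chart. Given any $\gamma\in U_\sigma$, set $F=\{m\in\sigma^\vee\cap M:\gamma(m)\neq 0\}$. One verifies that $F$ is a \emph{face of the semigroup} $\sigma^\vee\cap M$: it is closed under addition, and $m+m'\in F$ with $m,m'\in\sigma^\vee\cap M$ forces $m,m'\in F$. By the face correspondence for the rational polyhedral cone $\sigma^\vee$ — every semigroup face of $\sigma^\vee\cap M$ equals $\sigma^\vee\cap\tau^\perp\cap M$ for a unique face $\tau\preceq\sigma$ — we obtain a well-defined $\tau$, and a direct computation with the torus action (choosing $t\in T_N$ matching the nonzero values of $\gamma$ on generators of $F$) shows $\gamma\in O(\tau)$. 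Uniqueness of $\tau$ is clear since $\tau$ is recovered from the orbit as the cone whose orthogonal complement is spanned by the characters that are nonvanishing on $O(\tau)$.

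To globalize, I would use that $X_\Sigma$ is covered by the charts $U_\sigma$ ($\sigma\in\Sigma$), glued along $U_{\sigma_1\cap\sigma_2}$, together with the compatibility: if $\tau\preceq\sigma$ then $U_\tau\subseteq U_\sigma$ and the orbit $O(\tau)$ computed in either chart is the same subset. Hence the affine decompositions patch to a decomposition of $X_\Sigma$ into $T_N$-orbits indexed bijectively by $\Sigma$, which is (i). For (iv), the orbit closure is computed chart by chart: inside $U_\sigma$ with $\tau\preceq\sigma$, the Zariski closure of $O(\tau)\cong T_{N(\tau)}$ is the affine toric variety of the image cone $\overline{\sigma}\subseteq N(\tau)_\mathbb{R}$, whose faces are exactly the images of the cones $\sigma'\in\Sigma$ with $\tau\preceq\sigma'$ (the ``star'' of $\tau$); reading off the orbit stratification of this smaller toric variety gives $\overline{O(\tau)}=\bigcup_{\tau\preceq\sigma'}O(\sigma')$, and the equivalence $\tau\preceq\sigma\iff O(\sigma)\subseteq\overline{O(\tau)}$ then follows from disjointness of the strata.

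The step I expect to be the main obstacle is the bijection between semigroup faces of $\sigma^\vee\cap M$ and faces of $\sigma$ (equivalently, faces of the full-dimensional dual cone $\sigma^\vee$), and the verification that the set $F=\{m:\gamma(m)\neq 0\}$ attached to an arbitrary point is genuinely such a face. This is where the combinatorics of rational polyhedral cones — duality, relative interiors, and the fact that a supporting hyperplane of $\sigma^\vee$ cutting out a face has the form $H_u$ for $u$ in the relative interior of the dual face of $\sigma$ — must be used carefully; the gluing is then formal once the affine statement and the inclusion $U_\tau\subseteq U_\sigma$ for $\tau\preceq\sigma$ are in place.
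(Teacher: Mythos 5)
This theorem is not proved in the paper at all --- it is quoted as background, with the proof deferred to \cite[Theorem~3.2.6]{CLS11} --- so the only meaningful comparison is with the standard argument in that reference, which your sketch essentially reproduces: points of $U_\sigma$ as semigroup homomorphisms on $\sigma^\vee\cap M$, orbits classified by the support of a point via the face correspondence $\tau\mapsto\sigma^\vee\cap\tau^\perp$, the chart-by-chart decomposition $U_\sigma=\bigcup_{\tau\preceq\sigma}O(\tau)$, and orbit closures computed through the star of $\tau$. Your outline is correct, and you rightly identify the genuine technical crux, namely that the support $F=\{m:\gamma(m)\neq 0\}$ is the set of lattice points of a face of $\sigma^\vee$ dual to a unique face of $\sigma$ (together with the character-extension argument showing all points with a given support form one orbit).
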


\subsection*{Toric morphisms}

Let $N_1$ and $N_2$ be two lattices, $\Sigma_1$ a fan in $N_{1,\mathbb{R}}$, and $\Sigma_2$ a fan in $N_{2,\mathbb{R}}$. A $\textit{morphism of}$ $\textit{fans}$ $\Sigma_1\to\Sigma_2$ is a linear map $\overline{\phi}\colon N_{1,\mathbb{R}}\to N_{2,\mathbb{R}}$ such that it restricts to a homomorphism $N_1\to N_2$ and, for every cone $\sigma_1\in\Sigma_1$, there exists a cone $\sigma_2\in\Sigma_2$ such that $\overline{\phi}(\sigma_1)\subseteq\sigma_2$. If $X_1$ and $X_2$ are normal toric varieties, a morphism $\phi\colon X_1\to X_2$ is called $\textit{toric}$ if $\phi$ maps the torus $T_1\subseteq X_1$ into the torus $T_2\subseteq X_2$, and its restriction $T_1\to T_2$ is a group homomorphism. Any toric morphism $\phi\colon X_1\to X_2$ is $\textit{equivariant}$, meaning $\phi(t_1\cdot x_1)=\phi(t_1)\cdot\phi(x_1)$ for every $t_1\in T_1,x_1\in X_1$. 

A morphism of fans $\Sigma_1\to\Sigma_2$ induces a toric morphism $X_{\Sigma_1}\to X_{\Sigma_2}$ and, conversely, a toric morphism $X_{\Sigma_1}\to X_{\Sigma_2}$ induces a morphism of fans $\Sigma_1\to\Sigma_2$. This yields an equivalence of categories between the category of fans with morphisms of fans, and the category of normal toric varieties with toric morphisms.

\begin{exam}
    Given fans $\Sigma$ and $\Sigma'$ in $N_\mathbb{R}$, we say that $\Sigma'$ is a \textit{refinement of} $\Sigma$ if every cone of $\Sigma'$ is contained in a cone of $\Sigma$ and $\lvert\Sigma'\rvert=\lvert\Sigma\rvert$. In this case, the identity map of $N_\mathbb{R}$ is a morphism $\Sigma'\to\Sigma$, and the induced toric morphism $\phi\colon X_\Sigma'\to X_\Sigma$ is proper and birational.
\end{exam}

\begin{exam}\label{starsub}
    Let $\Sigma$ be a fan in $N_\mathbb{R}$ with $\dim (N_\mathbb{R})=n$. Let $\sigma=\Cone(u_1,\dots,u_n)$ be a smooth cone in $\Sigma$, that is, such that $\{u_1,\dots,u_n\}$ is a $\mathbb{Z}$-basis for $N$. Let $u_0=u_1+\cdots+u_n\in N$ and let $\Sigma'(\sigma)$ be the set of all cones generated by subsets of $\{u_0,\dots,u_n\}$ not containing $\{u_1,\dots,u_n\}$. Let $\rho_0\coloneqq\Cone(u_0)$ be the ray generated by $u_0$. Then, \[\Sigma^*(\sigma)\coloneqq(\Sigma\setminus\{\sigma\})\cup\Sigma'(\sigma)\] is a fan in $N_\mathbb{R}$, which is a refinement of $\Sigma$ and is called the \textit{star subdivision of} $\Sigma$ \textit{along} $\sigma$. The corresponding toric morphism $\pi\colon X_{\Sigma^*(\sigma)}\to X_\Sigma$ is the blow-up of $X_\Sigma$ at the distinguished point $\gamma_\sigma$, with exceptional divisor given by $\pi^{-1}(\gamma_\sigma)=\overline {O(\rho_0)}$.
\end{exam}

\subsection*{Toric divisors}
Let $X_\Sigma$ be the toric variety associated to a fan $\Sigma$ in $N_\mathbb{R}$, with $\dim (N_\mathbb{R})=n$. By the Orbit-Cone Correspondence, a ray $\rho\in\Sigma(1)$ corresponds to an $(n-1)$-dimensional orbit $O(\rho)$ whose closure $D_\rho\coloneqq \overline{O(\rho)}$ is a $T_N$-invariant Weil divisor on $X_\Sigma$. Divisors of the form $\sum_{\rho\in\Sigma(1)}a_\rho D_\rho$ with $a_\rho\in\mathbb{Z}$ are called \textit{toric divisors}. These are precisely the divisors on $X_\Sigma$ which are invariant under the torus action. Thus \[\Div_{T_N}(X_\Sigma)\coloneqq \bigoplus_{\rho\in\Sigma(1)}\mathbb{Z}D_\rho\subseteq\Div(X_\Sigma)\] is the group of $T_N$-invariant divisors on $X_\Sigma$. 

Let $u_\rho\in\rho\cap N$ be the ray generator of a ray $\rho\in\Sigma(1)$, and let $m\in M$ be a character of $T_N$. Then $m\colon T_N\to\mathbb{C}^*$ is a rational function on $X_\Sigma$, and its associated divisor is given by \[\ddiv(m)=\sum_{\rho\in\Sigma(1)}\langle m,u_\rho\rangle D_\rho\in\Div_{T_N}(X_\Sigma).\] We have an exact sequence

\begin{equation}\label{ses1}
    M\to\Div_{T_N}(X_\Sigma)\to\Cl(X_\Sigma)\to 0
\end{equation}
where the first map is given by $m\mapsto\ddiv(m)$ and $\Cl(X_\Sigma)$ denotes the class group of $X_\Sigma$. In particular, every divisor on $X_\Sigma$ is linearly equivalent to  a toric divisor. Moreover, the map given by $m\mapsto\ddiv(m)$ is injective if and only if the set $\{u_\rho\mid\rho\in\Sigma(1)\}$ spans $N_\mathbb{R}$.

Now let $D$ be a Cartier divisor on $X_\Sigma$. From the exact sequence \eqref{ses1}, $D$ is linearly equivalent to a toric divisor, which is itself Cartier. Let $\CDiv_{T_N}(X_\Sigma)\subseteq\Div_{T_N}(X_\Sigma)$ denote the subgroup of $T_N$-invariant Cartier divisors. Then, since $\ddiv(m)$ is Cartier for each $m\in M$, we have a further exact sequence \[M\to\CDiv_{T_N}(X_\Sigma)\to\Pic(X_\Sigma)\to 0\] where $\Pic(X_\Sigma)$ denotes the Picard group of $X_\Sigma$.

Let $\Sigma_{\max} \subseteq\Sigma$ be the set of maximal cones of $\Sigma$, that is, the cones in $\Sigma$ which are not proper subsets of another cone in $\Sigma$. Let $D=\sum_{\rho\in\Sigma(1)}a_\rho D_\rho$ be a toric divisor on $X_\Sigma$. Then, $D$ is Cartier if and only if for each $\sigma\in\Sigma_{\max}$ there exists a character $m_\sigma\in M$, such that $\langle m_\sigma, u_\rho\rangle=-a_\rho$ for every $\rho\in\sigma(1)$. When $D$ is Cartier, such a set $\{m_\sigma\mid\sigma\in\Sigma_{\max}\}$ is called the \textit{Cartier data of} $D$. Each $m_\sigma$ is unique modulo $\sigma^{\perp}\cap M$; in particular, if $\Sigma$ is complete, then $\Sigma_{\max}=\Sigma(n)$ and the Cartier data of $D$ is uniquely determined.

    Suppose that $\sigma_0\in\Sigma(n)\subseteq\Sigma_{\max}$ is a top-dimensional cone, and consider the subgroup

    \[A_{\sigma_0}\coloneqq\Big\{\sum_{\rho\in\Sigma(1)}a_\rho D_\rho\in\CDiv_{T_N}(X_\Sigma)\mid a_\rho=0 \textnormal{ for all }\rho\in\sigma_0(1)\Big\}\subseteq\CDiv_{T_N}(X_\Sigma).\] If $D=\sum_{\rho\in\Sigma(1)}a_\rho D_\rho$ is any toric Cartier divisor, whose Cartier data is given by $\{m_\sigma\mid\sigma\in\Sigma_{\max}\}$, then $D+\ddiv(m_{\sigma_0})\in A_{\sigma_0}$. This shows that the restriction $A_{\sigma_0}\to\Pic(X_\Sigma)$ of the natural surjective map $\CDiv_{T_N}(X_\Sigma)\to\Pic(X_\Sigma)$ remains surjective. But, moreover, this restriction map is also injective. Indeed, if $\ddiv(m)\in A_{\sigma_0}$ for some $m\in M$, then $\langle m,u_\rho\rangle=0$ for each $\rho\in\sigma_0(1)$, which implies that $m=0$ because $\sigma_0$ is top-dimensional. We have thus proven the following statement (cf. \cite[Exercise 6.1.6]{CLS11}).

\begin{prop}\label{toricpic}
    For each cone $\sigma_0\in\Sigma(n)$, the induced map
\[ \Big\{\sum_{\rho\in\Sigma(1)}a_\rho D_\rho\in\CDiv_{T_N}(X_\Sigma)\mid a_\rho=0 \textnormal{ for all }\rho\in\sigma_0(1)\Big\}\to \Pic(X_{\Sigma})\] is an isomorphism.
\end{prop}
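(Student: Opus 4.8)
The plan is to show directly that the composite
\[
A_{\sigma_0}\hookrightarrow\CDiv_{T_N}(X_\Sigma)\twoheadrightarrow\Pic(X_\Sigma),
\]
where $A_{\sigma_0}$ is the displayed subgroup and the second arrow is the natural surjection from the exact sequence $M\to\CDiv_{T_N}(X_\Sigma)\to\Pic(X_\Sigma)\to 0$, is both surjective and injective. Most of the work has been assembled in the paragraph preceding the statement; the proof just records it.

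For surjectivity, I would start from the fact that every class in $\Pic(X_\Sigma)$ is represented by some toric Cartier divisor $D=\sum_{\rho\in\Sigma(1)}a_\rho D_\rho$, with Cartier data $\{m_\sigma\mid\sigma\in\Sigma_{\max}\}$. The decisive step is to replace $D$ by the linearly equivalent divisor $D+\ddiv(m_{\sigma_0})$: for each $\rho\in\sigma_0(1)$, its coefficient of $D_\rho$ equals $a_\rho+\langle m_{\sigma_0},u_\rho\rangle=a_\rho-a_\rho=0$ by the defining property of Cartier data, so $D+\ddiv(m_{\sigma_0})\in A_{\sigma_0}$ and has the same image in $\Pic(X_\Sigma)$ as $D$. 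For injectivity, suppose $D=\sum_{\rho}a_\rho D_\rho\in A_{\sigma_0}$ maps to $0$ in $\Pic(X_\Sigma)$; then by exactness $D=\ddiv(m)$ for some $m\in M$, i.e.\ $a_\rho=\langle m,u_\rho\rangle$ for all $\rho\in\Sigma(1)$, and in particular $\langle m,u_\rho\rangle=0$ for every $\rho\in\sigma_0(1)$. Since $\sigma_0\in\Sigma(n)$ is top-dimensional, its ray generators $\{u_\rho\mid\rho\in\sigma_0(1)\}$ span $N_\mathbb{R}$, which forces $m=0$ and hence $D=0$.

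I do not expect any genuine obstacle here: the argument is purely formal. The only points requiring a little care are the sign convention in the definition of Cartier data ($\langle m_\sigma,u_\rho\rangle=-a_\rho$) when checking that $D+\ddiv(m_{\sigma_0})$ lands in $A_{\sigma_0}$, and the observation that a top-dimensional cone has enough rays for its generators to span $N_\mathbb{R}$, so that a character vanishing on all of them must be zero. Conceptually, what is being proved is that $A_{\sigma_0}$ is a splitting of the surjection $\CDiv_{T_N}(X_\Sigma)\to\Pic(X_\Sigma)$ complementary to the image of $M$, but verifying surjectivity and injectivity separately, as above, is the most transparent route.
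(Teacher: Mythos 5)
Your proof is correct and follows essentially the same route as the paper: represent a class by a toric Cartier divisor via the surjection $\CDiv_{T_N}(X_\Sigma)\to\Pic(X_\Sigma)$, translate by $\ddiv(m_{\sigma_0})$ to land in the subgroup $A_{\sigma_0}$ (using $\langle m_{\sigma_0},u_\rho\rangle=-a_\rho$), and deduce injectivity from the fact that a character vanishing on the ray generators of the top-dimensional cone $\sigma_0$ must be zero. No gaps; this is exactly the argument given in the paragraph preceding the statement in the paper.
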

We now recall that toric Cartier divisors behave well under pullbacks by a toric morphism. Let $N_1$ and $N_2$ be two lattices, with $\Sigma_1$ a fan in $N_{1,\mathbb{R}}$ and $\Sigma_2$ a fan in $N_{2,\mathbb{R}}$. Let $\phi\colon X_{\Sigma_1}\to X_{\Sigma_2}$ be a toric morphism, with $\overline{\phi}\colon N_{1,\mathbb{R}}\to N_{2,\mathbb{R}}$ its associated linear map, so that the restriction $\overline{\phi}\colon N_1\to N_2$ is a homomorphism inducing a dual homomorphism $\overline{\phi}^*\colon M_2\to M_1$. Then, the following result (cf. \cite[Proposition~6.2.7]{CLS11}) shows how to pullback a toric Cartier divisor on $X_{\Sigma_2}$ via $\phi$.

\begin{prop}\label{cartierpull}
    Let $D_2\in\CDiv_{T_{N_2}}(X_{\Sigma_2})$ with Cartier data $\{m_\sigma\mid\sigma\in(\Sigma_2)_{\max}\}$. For each $\sigma'\in(\Sigma_1)_{\max}$ we choose $\sigma\in(\Sigma_2)_{\max}$ such that $\overline{\phi}(\sigma')\subseteq\sigma$, and let \[m_{\sigma'}\coloneqq\overline{\phi}^*(m_\sigma).\] Then $\{m_{\sigma'}\mid\sigma'\in (\Sigma_1)_{\max}\}$ is the Cartier data of a toric Cartier divisor $D_1$ on $X_{\Sigma_1}$ satisfying $\OO_{X_{\Sigma_1}}(D_1)\cong\phi^*\OO_{X_{\Sigma_2}}(D_2)$.
\end{prop}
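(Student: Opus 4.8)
The plan is to argue in two stages. First, I would verify that the prescribed assignment $\sigma'\mapsto m_{\sigma'}$ is genuinely the Cartier data of a (well-defined) toric Cartier divisor $D_1$ on $X_{\Sigma_1}$; second, I would identify $\OO_{X_{\Sigma_1}}(D_1)$ with $\phi^*\OO_{X_{\Sigma_2}}(D_2)$ by comparing local trivializations of the two line bundles over a suitable affine cover.

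For the first stage, the key preliminary is that the Cartier data of $D_2$ is compatible on overlaps: if $\sigma,\tilde\sigma\in(\Sigma_2)_{\max}$, then $\tau\coloneqq\sigma\cap\tilde\sigma$ is a common face, hence a strongly convex rational cone equal to the conical hull of its ray generators $u_\rho$, $\rho\in\tau(1)$; writing $D_2=\sum_\rho a_\rho D_\rho$, the defining property of Cartier data gives $\langle m_\sigma,u_\rho\rangle=-a_\rho=\langle m_{\tilde\sigma},u_\rho\rangle$ for every such $\rho$, whence $m_\sigma-m_{\tilde\sigma}\in\tau^\perp\cap M_2$. Now fix $\sigma'\in(\Sigma_1)_{\max}$. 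Since $\phi$ arises from a morphism of fans and every cone of a fan is a face of a maximal one, a cone $\sigma\in(\Sigma_2)_{\max}$ with $\overline\phi(\sigma')\subseteq\sigma$ exists; and if $\tilde\sigma$ is another such choice, then for each $\rho'\in\sigma'(1)$ we have $\overline\phi(u_{\rho'})\in\overline\phi(\sigma')\subseteq\sigma\cap\tilde\sigma$, so $\langle\overline\phi^*(m_\sigma)-\overline\phi^*(m_{\tilde\sigma}),u_{\rho'}\rangle=\langle m_\sigma-m_{\tilde\sigma},\overline\phi(u_{\rho'})\rangle=0$. Hence the integer $a_{\rho'}\coloneqq-\langle m_{\sigma'},u_{\rho'}\rangle$ does not depend on the choice of $\sigma$. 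Running the same computation with two maximal cones $\sigma_1',\sigma_2'\in(\Sigma_1)_{\max}$ sharing a ray $\rho'$, together with chosen cones $\sigma_i\supseteq\overline\phi(\sigma_i')$ in $(\Sigma_2)_{\max}$, one gets $\overline\phi(u_{\rho'})\in\sigma_1\cap\sigma_2$ and therefore the values of $a_{\rho'}$ obtained from $\sigma_1'$ and from $\sigma_2'$ agree. As every ray of $\Sigma_1$ lies in a maximal cone, this produces a well-defined toric divisor $D_1=\sum_{\rho'\in\Sigma_1(1)}a_{\rho'}D_{\rho'}$ with $\langle m_{\sigma'},u_{\rho'}\rangle=-a_{\rho'}$ for all $\rho'\in\sigma'(1)$ and all $\sigma'\in(\Sigma_1)_{\max}$; by the Cartier criterion stated above, $D_1$ is Cartier with Cartier data $\{m_{\sigma'}\}$.

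For the second stage, I would invoke the standard local description of the line bundle of a toric Cartier divisor: $\OO_{X_{\Sigma_2}}(D_2)|_{U_\sigma}$ is the free $\OO_{U_\sigma}$-module generated by the character $\chi^{-m_\sigma}$ inside the sheaf of rational functions on $X_{\Sigma_2}$, so that $\OO_{X_{\Sigma_2}}(D_2)$ is obtained by gluing trivial bundles over $\{U_\sigma\}_{\sigma\in(\Sigma_2)_{\max}}$ with transition functions the restrictions of the characters $\chi^{m_\sigma-m_{\tilde\sigma}}$ on $U_\sigma\cap U_{\tilde\sigma}$, and similarly for $D_1$ over $\{U_{\sigma'}\}_{\sigma'\in(\Sigma_1)_{\max}}$. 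Recalling that $\overline\phi(\sigma')\subseteq\sigma$ forces $\phi(U_{\sigma'})\subseteq U_\sigma$, and that $\phi$ pulls characters back by $\phi^{\sharp}(\chi^m)=\chi^{\overline\phi^*(m)}$ for $m\in M_2$, we see that the cover $\{U_{\sigma'}\}$ computes $\phi^*\OO_{X_{\Sigma_2}}(D_2)$ with transition functions $\phi^{\sharp}(\chi^{m_\sigma-m_{\tilde\sigma}})=\chi^{\overline\phi^*(m_\sigma)-\overline\phi^*(m_{\tilde\sigma})}=\chi^{m_{\sigma'}-m_{\tilde\sigma'}}$. This is exactly the cocycle defining $\OO_{X_{\Sigma_1}}(D_1)$, so the two line bundles are isomorphic.

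I expect the only real difficulty to lie in the combinatorial bookkeeping of the first stage: one must confirm that the required maximal cones of $\Sigma_2$ exist (using the definition of a morphism of fans together with the fact that every cone is a face of a maximal one), that a common face of two cones of a fan is genuinely the conical hull of its rays so that the relation $m_\sigma-m_{\tilde\sigma}\in\tau^\perp$ holds, and that the locally defined integers $a_{\rho'}$ are independent of all the choices involved, so that they assemble into a single toric Cartier divisor $D_1$. Once these points are settled, the identification in the second stage is essentially formal, modulo fixing the sign conventions in the transition functions of $\OO(D)$ and the formula $\phi^{\sharp}(\chi^m)=\chi^{\overline\phi^*(m)}$.
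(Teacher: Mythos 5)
Your argument is correct, and it is essentially the standard proof: the paper itself does not prove Proposition \ref{cartierpull} but quotes it from \cite[Proposition~6.2.7]{CLS11}, whose proof follows the same two stages you outline (well-definedness of the pulled-back Cartier data via the compatibility $m_\sigma-m_{\tilde\sigma}\in(\sigma\cap\tilde\sigma)^\perp\cap M_2$, then comparison of local trivializations under $\phi^\sharp(\chi^m)=\chi^{\overline{\phi}^*(m)}$). One cosmetic point: with the paper's convention $\langle m_\sigma,u_\rho\rangle=-a_\rho$, the local generator of $\OO_{X_{\Sigma_2}}(D_2)$ over $U_\sigma$ is $\chi^{m_\sigma}$ rather than $\chi^{-m_\sigma}$ (e.g.\ for $D=D_0$ on $\mathbb{P}^1$ one has $m_{\sigma_0}=-e_1^*$ and generator $1/x=\chi^{m_{\sigma_0}}$); since you apply the same convention on both sides, the resulting cocycles $\chi^{\pm(m_{\sigma'}-m_{\tilde\sigma'})}$ still match and the isomorphism $\OO_{X_{\Sigma_1}}(D_1)\cong\phi^*\OO_{X_{\Sigma_2}}(D_2)$ goes through unchanged.
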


\subsection*{Positivity of toric divisors}
Recall that a sheaf $\mathcal{F}$ of $\mathcal{O}_X$-modules on a variety $X$ is \textit{generated by global sections} if there exists a set $\{s_i\}\subseteq H^0(X,\mathcal{F})$ such that, for every point $x\in X$, the stalk $\mathcal{F}_x$ is generated by the images of the global sections $\{s_i\}$. An invertible sheaf $\LL$ on $X$ is said to be \textit{ample} if for every coherent sheaf $\mathcal{F}$ on $X$, there is an integer $n_0>0$ (depending on $\mathcal{F}$) such that, for every $n\geq n_0$, the sheaf $\mathcal{F}\otimes\LL^n$ is generated by global sections. A Cartier divisor $D$ on $X$ is said to be: 

\begin{enumerate}
\item[(i)] \textit{nef}, if $D\cdot C\geq 0$ for every irreducible complete curve $C\subseteq X$,
    \item[(ii)] \textit{basepoint free}, if its associated invertible sheaf $\mathcal{O}_{X}(D)$ is generated by global sections, and
 \item[(iii)] \textit{ample}, if its associated invertible sheaf $\mathcal{O}_{X}(D)$ is ample.

\end{enumerate}

In the category of toric varieties, we can give a combinatorial characterization of nef, basepoint free and ample toric divisors, as follows. Let $X_\Sigma$ be a complete toric variety, and let $D=\sum_{\rho\in\Sigma(1)}a_\rho D_\rho$ be a toric Cartier divisor on $X_\Sigma$ with Cartier data $\{m_\sigma\mid\sigma\in\Sigma_{\max}\}$. Then, its \textit{support function} $\varphi_D\colon \lvert\Sigma\rvert\to\mathbb{R}$ is defined by \[\varphi_D(u)=\langle m_\sigma , u \rangle \] when $u\in\sigma\in\Sigma$. In particular, it satisfies $\varphi_D(u_\rho)=-a_{\rho}$ for each $\rho\in\Sigma(1)$. 

A cone $\tau\in\Sigma(n-1)$ is called a \textit{wall} if $\tau=\sigma\cap\sigma'$ for some $\sigma,\sigma'\in\Sigma(n)$. The support function $\varphi_D$ is known to be convex if, and only if, for every wall $\tau=\sigma\cap\sigma'$ there exists $u\in\sigma'\setminus\sigma$ such that the \textit{wall inequality} \[\varphi_D(u)\leq \langle m_\sigma ,u\rangle\] holds. Moreover, if $\varphi_D$ is convex, then the inequality $\varphi_D(u)\leq \langle m_\sigma ,u\rangle$ must hold for every $\sigma\in\Sigma(n), u\in\lvert\Sigma\rvert$ (cf. \cite[Lemma~6.1.5]{CLS11}).

Now, $\varphi_D$ is said to be \textit{strictly convex} if for every wall $\tau=\sigma\cap\sigma'$ there exists $u\in\sigma'\setminus\sigma$, such that the \textit{strict wall inequality} \[\varphi_D(u)<\langle m_\sigma ,u\rangle\] holds. Moreover, if $\varphi_D$ is strictly convex, then the inequality $\varphi_D(u) < \langle m_\sigma ,u\rangle$ must hold for every $\sigma\in\Sigma(n), u\in\lvert\Sigma\rvert\setminus\sigma$ (cf. \cite[Lemma~6.1.13]{CLS11}). These properties of the support function characterize nefness and ampleness of toric Cartier divisors.

\begin{thm}\label{nefcrit}
    Let $D$ be a toric Cartier divisor on a complete toric variety $X_\Sigma$. Then:
    \begin{enumerate}
    \item[(i)] $D$ is nef if and only if it is basepoint free, if and only if $\varphi_D$ is convex. 
        \item[(ii)] $D$ is ample if and only if $\varphi_D$ is strictly convex.
    \end{enumerate}
\end{thm}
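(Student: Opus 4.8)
The plan is to prove, for a toric Cartier divisor $D=\sum_{\rho\in\Sigma(1)}a_\rho D_\rho$ with Cartier data $\{m_\sigma\}_{\sigma\in\Sigma_{\max}}$, the cycle of implications
\[
\varphi_D\text{ convex}\ \Longrightarrow\ D\text{ basepoint free}\ \Longrightarrow\ D\text{ nef}\ \Longrightarrow\ \varphi_D\text{ convex},
\]
which settles part (i), and then to upgrade each arrow to its strict analogue for part (ii). The argument rests on two inputs. The first is the torus-equivariant identification of the space of global sections,
\[
H^0\big(X_\Sigma,\OO_{X_\Sigma}(D)\big)=\bigoplus_{m\in P_D\cap M}\mathbb{C}\,\chi^m,\qquad P_D\coloneqq\{m\in M_\mathbb{R}\mid\langle m,u_\rho\rangle\geq -a_\rho\text{ for all }\rho\in\Sigma(1)\},
\]
where $\chi^m$ is the character attached to $m\in M$ and $P_D$ is a bounded polyhedron since $\Sigma$ is complete. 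The second is the description of torus-invariant curves and their intersection numbers furnished by the Orbit--Cone Correspondence.

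First I would establish the equivalence $\varphi_D\text{ convex}\Longleftrightarrow D\text{ basepoint free}$. Over the chart $U_\sigma$ with $\sigma\in\Sigma(n)$ one has $D|_{U_\sigma}=\ddiv(\chi^{-m_\sigma})|_{U_\sigma}$, so $\OO_{X_\Sigma}(D)|_{U_\sigma}$ is free with generator $\chi^{m_\sigma}$, and a global section $\chi^m$ with $m\in P_D\cap M$ is represented in this trivialization by the monomial $\chi^{m-m_\sigma}\in\mathbb{C}[\sigma^\vee\cap M]$. Since the charts $U_\sigma$, $\sigma\in\Sigma(n)$, cover $X_\Sigma$, and a monomial ideal of $\mathbb{C}[\sigma^\vee\cap M]$ equals the unit ideal exactly when it contains $\chi^0$, the sheaf $\OO_{X_\Sigma}(D)$ is globally generated if and only if $m_\sigma\in P_D\cap M$ for every $\sigma\in\Sigma(n)$ (using $\sigma^\perp=0$ for top-dimensional $\sigma$). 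Now $m_\sigma\in P_D$ unwinds to $\varphi_D(u_\rho)\leq\langle m_\sigma,u_\rho\rangle$ for all $\rho\in\Sigma(1)$, and since $\varphi_D$ is linear on each cone and cones are spanned by their ray generators, demanding this for every $\sigma\in\Sigma(n)$ is precisely the inequality $\varphi_D(u)\leq\langle m_\sigma,u\rangle$ for all $\sigma\in\Sigma(n)$ and all $u\in|\Sigma|$, i.e.\ the convexity of $\varphi_D$ as recalled before the statement. That basepoint freeness implies nefness is the general fact that a globally generated line bundle restricts to any irreducible complete curve with a nonzero section, hence of nonnegative degree.

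For the implication $D\text{ nef}\Longrightarrow\varphi_D\text{ convex}$ I would argue by contraposition. If $\varphi_D$ is not convex, there is a wall $\tau=\sigma\cap\sigma'$ with $\sigma,\sigma'\in\Sigma(n)$ and a ray generator $u'$ of $\sigma'$ not lying in $\tau$ for which $\varphi_D(u')>\langle m_\sigma,u'\rangle$. By the Orbit--Cone Correspondence the orbit closure $V(\tau)\coloneqq\overline{O(\tau)}$ is an irreducible complete curve (in fact $V(\tau)\cong\mathbb{P}^1$, being the complete toric variety of the one-dimensional quotient fan along $\tau$), and the standard toric intersection formula gives $D\cdot V(\tau)=\langle m_\sigma-m_{\sigma'},u'\rangle=\langle m_\sigma,u'\rangle-\varphi_D(u')<0$, contradicting nefness. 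This closes the cycle and proves (i).

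Finally, for (ii): if $D$ is ample it is nef, hence $\varphi_D$ is convex by (i); moreover $\OO_{X_\Sigma}(D)|_{V(\tau)}$ is then ample on the complete curve $V(\tau)$, so $D\cdot V(\tau)>0$ for every wall $\tau$, which by the same intersection formula is exactly the strict wall inequality, and therefore $\varphi_D$ is strictly convex. Conversely, if $\varphi_D$ is strictly convex then the $m_\sigma$ are the distinct vertices of $P_D$, making $P_D$ a full-dimensional lattice polytope whose normal fan is $\Sigma$; the lattice points of a sufficiently large multiple $kP_D$ then define a projectively normal closed embedding $X_\Sigma\hookrightarrow\mathbb{P}^N$ under which $\OO_{\mathbb{P}^N}(1)$ pulls back to $\OO_{X_\Sigma}(kD)$, so $D$ is ample. (Alternatively, $\OO_{X_\Sigma}(D)$ is already basepoint free by (i), giving a morphism $\phi$ with $\phi^*\OO_{\mathbb{P}^N}(1)\cong\OO_{X_\Sigma}(D)$; strict convexity forces $D\cdot V(\tau)>0$ for all invariant curves $V(\tau)$, and since these span the cone of curves of the complete toric variety $X_\Sigma$, the morphism $\phi$ contracts no curve, hence is finite, so $\OO_{X_\Sigma}(D)$ is the pullback of an ample bundle along a finite morphism and is ample.) I expect the soft part of the proof to be the equivalence with convexity via global sections and equivariance; the genuine obstacle lies in the two places where torus-invariant intersection theory enters, namely the formula expressing $D\cdot V(\tau)$ through $\varphi_D$ (needed both for $\text{nef}\Rightarrow\text{convex}$ and for the strictness in (ii)) and the fact that the cone of curves of a complete toric variety is generated by the invariant curve classes (needed to pass from strict convexity to ampleness). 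These are the inputs I would either import from \cite{CLS11} or establish in a short auxiliary lemma about the curves $V(\tau)$.
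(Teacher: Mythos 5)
Your proposal is mathematically sound, but it takes a very different route from the paper: the paper does not reprove this statement at all, it simply assembles it from citations --- basepoint free $\iff$ $\varphi_D$ convex is \cite[Theorem~6.1.7]{CLS11}, basepoint free $\iff$ nef is \cite[Theorem~6.3.12]{CLS11}, and part (ii) is \cite[Theorem~6.1.14]{CLS11} --- whereas you sketch an essentially self-contained reconstruction of those CLS proofs. Your cycle convex $\Rightarrow$ bpf $\Rightarrow$ nef $\Rightarrow$ convex is correctly organized: the section computation on the charts $U_\sigma$, $\sigma\in\Sigma(n)$ (which do cover $X_\Sigma$ because $\Sigma$ is complete), the reduction of global generation to $m_\sigma\in P_D$ via the unit-ideal criterion (valid since $\sigma^\vee$ is strongly convex for top-dimensional $\sigma$), and the passage from the ray-generator inequalities to $\varphi_D(u)\leq\langle m_\sigma,u\rangle$ for all $u\in\lvert\Sigma\rvert$ by linearity of $\varphi_D$ on each cone are all fine. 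Two small caveats on the parts you flag as imported: the intersection formula should be stated as $D\cdot V(\tau)$ being a positive rational multiple of $\langle m_\sigma-m_{\sigma'},u'\rangle$ (the factor is a lattice index, and in the non-simplicial case $u'$ should be a lattice point of $\sigma'\setminus\tau$, or one notes the sign is independent of the choice since $m_\sigma-m_{\sigma'}$ vanishes on the span of $\tau$); only the sign matters for your argument, so this is harmless. Likewise, for strictly convex $\Rightarrow$ ample, claiming a \emph{projectively normal} embedding from $kP_D$ is more than you need --- very ampleness of $kD$ for $k\gg 0$ suffices --- and your alternative argument via $D\cdot V(\tau)>0$, the toric cone theorem, and finiteness of the induced morphism is equally valid but again rests on an imported input. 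In short: the paper buys brevity by outsourcing everything to \cite{CLS11}; your version buys transparency at the cost of either proving or importing the intersection formula for invariant curves and one positivity input (very ampleness of multiples, or the generation of the cone of curves by invariant curves), exactly as you anticipate.
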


\begin{proof}
    According to \cite[Theorem~6.1.7]{CLS11}, $D$ is basepoint free if and only if $\varphi_D$ is convex, while $D$ is basepoint free if and only if it is nef according to \cite[Theorem~6.3.12]{CLS11}. Part $(ii)$ is the content of \cite[Theorem~6.1.14]{CLS11}.
\end{proof}

\subsection*{Sheaf cohomology of toric varieties}
Let $D=\sum_{\rho\in\Sigma(1)}a_\rho D_\rho$ be a toric Cartier divisor on a toric variety $X_\Sigma$. Then, the group of global sections of $\OO_{X_\Sigma}(D)$ admits a grading \[H^0(X_\Sigma, \OO_{X_\Sigma}(D))=\bigoplus_{m\in M} H^0(X_\Sigma, \OO_{X_\sigma}(D))_m,\] where \begin{equation*} H^0(X_\Sigma, \OO_{X_\Sigma}(D))_m\coloneqq
    \begin{cases}
      \mathbb{C}\cdot m & \text{if}\ \langle m,u_\rho \rangle\geq -a_\rho \textnormal{ for every } \rho\in\Sigma(1),\\
      0 & \text{otherwise.}
    \end{cases} 
\end{equation*} This induces a natural grading of the cohomology groups \[H^i(X_\Sigma, \OO_{X_\Sigma}(D))=\bigoplus_{m\in M} H^i(X_\Sigma, \OO_{X_\Sigma}(D))_m\] for every $i\geq 0$.
    
    For each $m\in M$, we then consider the union of convex hulls \[V_{D,m}\coloneqq\bigcup_{\sigma\in\Sigma_{\max}}\Conv(u_\rho\mid\rho\in\sigma(1), \langle m,u_\rho\rangle<-a_\rho)\subseteq N_\mathbb{R}.\] Then, there is a comparison theorem between the sheaf cohomology of $\OO_{X_\Sigma}(D)$ and the singular cohomology of the sets $V_{D,m}\subseteq N_\mathbb{R}\cong\mathbb{R}^n$.

    \begin{thm}\textnormal{\cite[Theorem~9.1.3]{CLS11}}\label{torcoh}
        For each $m\in M$ and each $i\geq 0$, there is an isomorphism \[H^i(X_\Sigma,\OO_{X_\Sigma}(D))_m\cong \widetilde{H}^{i-1}(V_{D,m},\mathbb{C}),\] where $\widetilde{H}^{i-1}$ denotes reduced singular cohomology of degree $i-1$.
    \end{thm}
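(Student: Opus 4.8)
The plan is to compute both sides through the \v{C}ech complex of the canonical affine cover of $X_\Sigma$, to identify its degree-$m$ graded piece with the relative cochain complex of a pair of simplicial complexes, and then to recognise the relevant simplicial complex as the nerve of a good cover of $V_{D,m}$.

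\textbf{Reduction to a graded \v{C}ech complex.} Take $\mathcal{U}=\{U_\sigma\}_{\sigma\in\Sigma_{\max}}$. Each $U_\sigma$ is affine, $X_\Sigma$ is separated, and finite intersections satisfy $U_{\sigma_{i_0}}\cap\cdots\cap U_{\sigma_{i_p}}=U_{\sigma_{i_0}\cap\cdots\cap\sigma_{i_p}}$, which is again an affine toric variety because the intersection of cones of a fan is a common face of each. As $\OO_{X_\Sigma}(D)$ is quasi-coherent, \v{C}ech cohomology for $\mathcal{U}$ computes $H^\bullet(X_\Sigma,\OO_{X_\Sigma}(D))$. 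The torus $T_N$ acts compatibly on all of this data, so the \v{C}ech complex inherits the $M$-grading and cohomology commutes with passage to the degree-$m$ part; hence it suffices to analyse $\check{C}^\bullet(\mathcal{U},\OO_{X_\Sigma}(D))_m$. Now, for a cone $\tau\in\Sigma$ one has the affine version of the grading recalled before the statement: $H^0(U_\tau,\OO_{X_\Sigma}(D))_m=\mathbb{C}$ if $\langle m,u_\rho\rangle\geq-a_\rho$ for every $\rho\in\tau(1)$, and $0$ otherwise (here $D$ Cartier is used, so that $\OO_{X_\Sigma}(D)$ is locally trivialized by the Cartier data). Call a ray $\rho$ \emph{$m$-deficient} if $\langle m,u_\rho\rangle<-a_\rho$. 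Writing $\sigma_I\coloneqq\bigcap_{i\in I}\sigma_i$, the term of $\check{C}^\bullet(\mathcal{U},\OO_{X_\Sigma}(D))_m$ in cohomological degree $p$ is a copy of $\mathbb{C}$ for each $(p+1)$-element subset $I\subseteq\Sigma_{\max}$ such that $\sigma_I$ has no $m$-deficient ray, the differentials being signed sums of identity maps.

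\textbf{The graded complex is a relative cochain complex.} Because a larger index set yields a smaller intersection, and $\sigma_I$ is a face of each $\sigma_i$ (so $\sigma_I(1)\subseteq\sigma_i(1)$ and $\sigma_I\in\Sigma$), the family $\Delta_m\coloneqq\{I\subseteq\Sigma_{\max}\mid\sigma_I\text{ has an }m\text{-deficient ray}\}$ is closed under passing to subsets, hence is an abstract simplicial complex, and $\check{C}^\bullet(\mathcal{U},\OO_{X_\Sigma}(D))_m$ is visibly the quotient $C^\bullet(\Delta;\mathbb{C})/C^\bullet(\Delta_m;\mathbb{C})$, where $\Delta$ is the full simplex on the vertex set $\Sigma_{\max}$; thus it computes the relative cohomology $H^\bullet(\Delta,\Delta_m;\mathbb{C})$. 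Since $\Delta$ is contractible, the long exact sequence of the pair gives $H^i(\Delta,\Delta_m;\mathbb{C})\cong\widetilde{H}^{\,i-1}(\Delta_m;\mathbb{C})$ for every $i\geq0$, with the usual conventions when $\Delta_m=\emptyset$ (which occurs exactly when $m$ indexes a global section). It remains to produce a homotopy equivalence between the geometric realization $\lvert\Delta_m\rvert$ and $V_{D,m}$. For each $\sigma\in\Sigma_{\max}$ put $P_\sigma\coloneqq\Conv(u_\rho\mid\rho\in\sigma(1)\ m\text{-deficient})$, so that $V_{D,m}=\bigcup_{\sigma\in\Sigma_{\max}}P_\sigma$. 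We claim $\Delta_m$ is exactly the nerve of the closed cover $\{P_\sigma\}_{\sigma\in\Sigma_{\max}}$ of $V_{D,m}$: if $\sigma_I$ has an $m$-deficient ray $\rho_0$ then $u_{\rho_0}\in P_{\sigma_i}$ for all $i\in I$ (as $\rho_0\in\sigma_i(1)$ and $m$-deficiency is intrinsic to the ray), so $\bigcap_{i\in I}P_{\sigma_i}\neq\emptyset$; conversely, a point $v\in\bigcap_{i\in I}P_{\sigma_i}$ lies in $\sigma_I$, and since $\varphi_D$ is linear on each $\sigma_i$ and on $\sigma_I\in\Sigma$ with $\varphi_D(u_\rho)=-a_\rho$, evaluating $\varphi_D-\langle m,-\rangle$ at $v$ (which is positive because $v$ is a convex combination of $m$-deficient generators of some $\sigma_i$) and expanding $v$ through the minimal generators of $\sigma_I$ forces one of the latter to be $m$-deficient. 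Moreover every nonempty finite intersection $\bigcap_{i\in I}P_{\sigma_i}$ is convex, hence contractible. So the nerve lemma, for this good polyhedral closed cover, yields $\lvert\Delta_m\rvert\simeq V_{D,m}$, whence
\[H^i(X_\Sigma,\OO_{X_\Sigma}(D))_m\cong\widetilde{H}^{\,i-1}(\lvert\Delta_m\rvert;\mathbb{C})\cong\widetilde{H}^{\,i-1}(V_{D,m};\mathbb{C}).\]

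\textbf{Main obstacle.} The two genuinely nontrivial points both lie in the last step: one must verify that $\Delta_m$ really is the nerve of $\{P_\sigma\}$ — the forward inclusion is immediate, but the converse rests on the linearity of the support function $\varphi_D$ on every cone of $\Sigma$ together with $\varphi_D(u_\rho)=-a_\rho$ — and one must invoke the nerve lemma in a form valid for a \emph{closed} cover, which is legitimate here because the $P_\sigma$ and their intersections are polytopes, but which requires some care (for instance by thickening to a homotopy-equivalent open cover with the same nerve, or by equipping $V_{D,m}$ with a compatible regular cell structure). Everything else — the affine/semigroup dictionary, the $M$-graded \v{C}ech computation, and the long exact sequence of a pair — is routine.
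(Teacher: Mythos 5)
The paper offers no proof of this statement: it is quoted directly from \cite[Theorem~9.1.3]{CLS11}, so the only meaningful comparison is with that source, and your proposal follows essentially the same standard strategy (a graded \v{C}ech computation reduced to the topology of a union of convex pieces). The outline is correct: \v{C}ech cohomology for the affine cover $\{U_\sigma\}_{\sigma\in\Sigma_{\max}}$, passage to the degree-$m$ piece, identification of that piece with a relative simplicial cochain complex of the pair $(\Delta,\Delta_m)$, the long exact sequence of the pair, and the identification of $\Delta_m$ with the nerve of the cover of $V_{D,m}$ by the polytopes $P_\sigma$; your converse step in the nerve identification (linearity of $\varphi_D$ on each cone of $\Sigma$ together with $\varphi_D(u_\rho)=-a_\rho$, applied first on some $\sigma_i$ and then on $\sigma_I$) is the right argument, and its reliance on the Cartier data is consistent with the paper's standing hypothesis that $D$ is Cartier (the result in the source is stated more generally, where one would instead argue with a supporting hyperplane cutting out the face $\sigma_I$). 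Two points to repair or emphasize. First, the degree-$m$ \v{C}ech complex is the \emph{kernel} of the restriction map $C^\bullet(\Delta;\mathbb{C})\to C^\bullet(\Delta_m;\mathbb{C})$, i.e.\ the relative cochain complex $C^\bullet(\Delta,\Delta_m;\mathbb{C})$, not a quotient of $C^\bullet(\Delta;\mathbb{C})$ by $C^\bullet(\Delta_m;\mathbb{C})$: extension by zero of cochains supported on $\Delta_m$ does not commute with the coboundary, so the ``quotient'' description is not literally meaningful, though this does not affect the long exact sequence argument. Second, the step you yourself flag, the nerve lemma for a finite \emph{closed} cover, is the only genuinely delicate point, and your proposed fix is adequate: since every $P_\sigma$ and every nonempty intersection $\bigcap_{i\in I}P_{\sigma_i}$ is a compact convex polytope, one may triangulate $V_{D,m}$ so that each $P_\sigma$ is a subcomplex and apply the simplicial nerve theorem (or thicken to an open good cover with the same nerve). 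With those two repairs the argument is complete.
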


    As an application of Theorem~\ref{torcoh}, one has the following vanishing theorem (see \cite[Theorem~9.2.3]{CLS11} for a proof).

    \begin{thm}[Demazure vanishing]\label{Demazure}
        If $\lvert\Sigma\rvert$ is convex and $D$ is nef, then
        \[H^i(X_\Sigma,\OO_{X_\Sigma}(D))=0\] for all $i>0$.
    \end{thm}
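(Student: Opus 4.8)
The plan is to feed everything into the comparison theorem (Theorem~\ref{torcoh}) and thereby reduce the vanishing to a purely topological statement about the sets $V_{D,m}$. Using the induced grading, we have $H^i(X_\Sigma,\OO_{X_\Sigma}(D))\cong\bigoplus_{m\in M}\widetilde H^{i-1}(V_{D,m},\mathbb{C})$, so it suffices to prove that for every $m\in M$ the reduced cohomology $\widetilde H^{j}(V_{D,m},\mathbb{C})$ vanishes for all $j\geq 0$; equivalently, that each $V_{D,m}$ is either empty or has the reduced cohomology of a point. I would in fact establish the stronger statement that $V_{D,m}$ is either empty or contractible.

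To exploit the hypotheses, recall that since $D$ is nef, Theorem~\ref{nefcrit}(i) gives that $\varphi_D$ is convex, whence (as recalled just before that theorem) $\varphi_D(u)\leq\langle m_\sigma,u\rangle$ for every $\sigma\in\Sigma(n)$ and every $u\in\lvert\Sigma\rvert$, with equality when $u\in\sigma$. Thus $\varphi_D=\min_{\sigma\in\Sigma(n)}\langle m_\sigma,\cdot\rangle$ is a minimum of finitely many linear functionals, so the function $\psi_m\coloneqq\varphi_D-\langle m,\cdot\rangle$ is concave on the convex set $\lvert\Sigma\rvert$. I would then introduce its strict superlevel set
\[\widehat V\coloneqq\{u\in\lvert\Sigma\rvert\mid\langle m,u\rangle<\varphi_D(u)\},\]
which is convex, being a superlevel set of the concave function $\psi_m$ on the convex domain $\lvert\Sigma\rvert$. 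Two elementary observations tie $\widehat V$ to $V_{D,m}$: first, because $\varphi_D$ restricts to $\langle m_\sigma,\cdot\rangle$ on each $\sigma$, every point of each polytope $P_\sigma\coloneqq\Conv(u_\rho\mid\rho\in\sigma(1),\ \langle m,u_\rho\rangle<-a_\rho)$ satisfies $\psi_m>0$, so $V_{D,m}\subseteq\widehat V$; second, $V_{D,m}=\emptyset$ if and only if $\widehat V=\emptyset$, since $\psi_m$ is linear on each cone, so a positive value of $\psi_m$ somewhere on a cone forces $\psi_m(u_\rho)=-a_\rho-\langle m,u_\rho\rangle>0$ for some ray generator $u_\rho$ of that cone, i.e.\ an ``active'' ray.

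Assuming $V_{D,m}\neq\emptyset$, I would prove contractibility by a nerve argument. The family $\{P_\sigma\}_{\sigma\in\Sigma_{\max}}$ is a finite closed cover of $V_{D,m}$ by convex polytopes, and since the $P_\sigma$ are spanned by ray generators while maximal cones meet along common faces, one checks that $P_{\sigma}\cap P_{\sigma'}=P_{\sigma\cap\sigma'}$, and more generally that every multiple intersection equals the polytope attached to the common face; in particular all nonempty intersections are convex. Likewise $\{\widehat V\cap\sigma\}_{\sigma\in\Sigma_{\max}}$ is a finite closed cover of $\widehat V$ by convex sets with convex multiple intersections. The decisive point is that these two covers have the \emph{same} nerve: by the linearity of $\psi_m$ on each cone and the equivalence noted above, a face $\tau$ carries an active ray generator (so $P_\tau\neq\emptyset$) if and only if $\psi_m>0$ somewhere on $\tau$ (so $\widehat V\cap\tau\neq\emptyset$). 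Applying the Nerve Lemma to each cover, both $V_{D,m}$ and $\widehat V$ are homotopy equivalent to the geometric realization of this common nerve; as $\widehat V$ is a nonempty convex set it is contractible, and hence so is $V_{D,m}$. This yields $\widetilde H^{j}(V_{D,m},\mathbb{C})=0$ for all $j\geq 0$ and all $m$, completing the argument.

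The main obstacle is exactly the contractibility of $V_{D,m}$: unlike $\widehat V$, this set is only a union of lower-dimensional simplices glued across the maximal cones and is genuinely non-convex in general (it can be, for instance, a bent polygonal path), so convexity cannot be transported to it directly. The nefness hypothesis enters precisely to force the concavity of $\psi_m$, which prevents the active rays from being distributed among the cones in a way that disconnects $V_{D,m}$ or creates higher cohomology; this is what makes the two nerves coincide. The step requiring the most care is verifying that $\{P_\sigma\}$ and $\{\widehat V\cap\sigma\}$ are good covers (with contractible multiple intersections) so that the Nerve Lemma applies, together with the bookkeeping that identifies the two nerves.
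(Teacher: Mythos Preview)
The paper does not supply its own proof of Demazure vanishing; it merely states the result and cites \cite[Theorem~9.2.3]{CLS11}. There is therefore no in-paper argument to compare against.

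Your sketch is correct and is essentially the standard proof found in the cited reference: reduce via Theorem~\ref{torcoh} to showing each nonempty $V_{D,m}$ is contractible, introduce the convex superlevel set $\widehat V=\{u\in|\Sigma|:\langle m,u\rangle<\varphi_D(u)\}$ using the concavity of $\varphi_D$ furnished by nefness, and transfer contractibility from $\widehat V$ to $V_{D,m}$. Your choice to effect this transfer via the Nerve Lemma, rather than by an explicit deformation retraction as in \cite{CLS11}, is a clean variant; the key identities $P_{\sigma_0}\cap\cdots\cap P_{\sigma_k}=P_{\sigma_0\cap\cdots\cap\sigma_k}$ (which holds for arbitrary, not just simplicial, cones via the supporting-hyperplane description of faces) and $P_\tau\neq\emptyset\Leftrightarrow\widehat V\cap\tau\neq\emptyset$ (by linearity of $\psi_m$ on each cone) are exactly what is needed to match the two nerves.

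One minor wrinkle: you invoke Theorem~\ref{nefcrit}(i), which in this paper is stated only for \emph{complete} $X_\Sigma$, whereas Demazure vanishing assumes merely that $|\Sigma|$ is convex. The needed implication ``$D$ nef $\Rightarrow$ $\varphi_D$ convex'' does hold under the weaker hypothesis, but strictly speaking you should source it from \cite[Theorems~6.1.7 and~6.3.12]{CLS11} rather than from the paper's restricted formulation.
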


    Another classic cohomological vanishing result is the Kodaira vanishing theorem, which states that if $X$ is a smooth projective variety and $A$ is an ample Cartier divisor on $X$, then \[H^i(X,\mathcal{O}_X(K_X+A))=0\] for all $i>0$, where $K_X$ denotes a canonical divisor on $X$ (see, e.g., \cite[Theorem~4.2.1]{Laz04} for a proof). Now, if $X_{\Sigma}$ is a toric variety, a torus invariant canonical divisor is given by \[K_{X_\Sigma}=-\sum_{\rho\in\Sigma(1)} D_\rho\] according to \cite[Theorem~8.2.3]{CLS11}. Hence, we may state the Kodaira vanishing theorem in the context of toric geometry as follows.

    \begin{thm}[Kodaira vanishing]
        Let $X_{\Sigma}$ be a smooth projective toric variety and let $A\in\Div_{T_N}(X_\Sigma)$ be an ample toric divisor. Then
        \[H^i\big(X_{\Sigma},\OO_{X_{\Sigma}}\big(A-\sum_{\rho\in\Sigma(1)}D_{\rho}\big)=0\] for all $i>0$.
    \end{thm}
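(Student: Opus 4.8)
The plan is to deduce this toric statement directly from the classical Kodaira vanishing theorem recalled just above, using the explicit formula for the canonical divisor of a toric variety. The point is that the divisor $A-\sum_{\rho\in\Sigma(1)}D_\rho$ is nothing but $K_{X_\Sigma}+A$, so there is nothing to prove beyond checking that the hypotheses of the classical theorem are in force.

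First I would observe that, since $X_\Sigma$ is smooth, every Weil divisor on it is Cartier; in particular the toric divisor $A$ is Cartier, so $\OO_{X_\Sigma}(A)$ is a genuine invertible sheaf and the assumption that $A$ is ample means precisely that $\OO_{X_\Sigma}(A)$ is an ample line bundle on the smooth projective variety $X_\Sigma$. Likewise $\sum_{\rho\in\Sigma(1)}D_\rho$ is Cartier, hence so is $A-\sum_{\rho\in\Sigma(1)}D_\rho$. Next I would invoke \cite[Theorem~8.2.3]{CLS11}, quoted above, which exhibits $K_{X_\Sigma}=-\sum_{\rho\in\Sigma(1)}D_\rho$ as a torus-invariant canonical divisor on $X_\Sigma$. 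Consequently the divisor in the statement satisfies
\[
    A-\sum_{\rho\in\Sigma(1)}D_\rho = K_{X_\Sigma}+A
\]
as torus-invariant Cartier divisors, and in particular $\OO_{X_\Sigma}\big(A-\sum_{\rho\in\Sigma(1)}D_\rho\big)\cong\OO_{X_\Sigma}(K_{X_\Sigma}+A)$.

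Finally I would apply the classical Kodaira vanishing theorem (\cite[Theorem~4.2.1]{Laz04}) to the smooth projective variety $X_\Sigma$ and the ample Cartier divisor $A$, obtaining $H^i(X_\Sigma,\OO_{X_\Sigma}(K_{X_\Sigma}+A))=0$ for all $i>0$; combining this with the isomorphism of the previous step gives the claim. I do not expect any genuine obstacle here: the only thing that needs verification is that the classical theorem applies, and each of smoothness, projectivity and ampleness is either assumed outright or—as with the Cartier property of $A$—an immediate consequence of smoothness of $X_\Sigma$.
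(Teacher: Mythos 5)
Your proposal is correct and matches the paper's own reasoning: the paper states this theorem precisely as the classical Kodaira vanishing theorem (\cite[Theorem~4.2.1]{Laz04}) combined with the toric canonical divisor formula $K_{X_\Sigma}=-\sum_{\rho\in\Sigma(1)}D_\rho$ from \cite[Theorem~8.2.3]{CLS11}, exactly as you do. Your added remark that smoothness guarantees all divisors involved are Cartier is a harmless (and correct) elaboration of the same argument.
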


\section{The blow-up of projective space at one point}\label{onept}
 In the present section, we utilize the tools of toric geometry, as recalled in Section~\ref{overview}, in order to determine the ample cone of a blow-up $X$ of $\mathbb{P}^n$ at one point, thence obtaining a cohomological vanishing result for invertible sheaves on $X$ via the Kodaira vanishing theorem. Moreover, we characterize the isomorphism classes of invertible sheaves on $X$ whose cohomology in degree 1 vanishes, with the use of Theorem~\ref{torcoh}.

Let $n\geq 3$ be an integer. Let $T\coloneqq (\mathbb{C}^\times)^n$ be the $n$-dimensional complex torus, and $\{e_1,\dots,e_n\}$ the standard basis of its lattice of one-parameter subgroups $N=\mathbb{Z}^n$. We set \[e_0\coloneqq  -e_1-e_2-\cdots -e_n\in\mathbb{Z}^n\] and we let $\Sigma$ be the fan in $N_\mathbb{R}=\mathbb{R}^n$ consisting of the cones generated by all proper subsets of $\{e_0,\dots,e_n\}$, that is, \begin{equation}\label{fanpn}
    \Sigma\coloneqq\{\Cone(S)\mid S\subsetneq\{e_0,\dots,e_n\}\}.
\end{equation} The associated toric variety is then $X_\Sigma\cong\mathbb{P}^n$. For each $i=0,\dots,n$ we let \[\sigma_i\coloneqq\Cone(e_0,\dots,\widehat{e}_i,\dots,e_n)\subseteq\mathbb{R}^n,\] where the hat notation $\widehat{e}_i$ means that the vector $e_i$ is excluded. The set of maximal cones of $\Sigma$ is then given by $\Sigma_{\max}=\{\sigma_0,\dots,\sigma_n\}$, while the set of ray generators of $\Sigma$ is equal to $\{e_0,\dots,e_n\}$. For each $i=0,\dots,n$, let $D_{i}$ denote the toric divisor in $\mathbb{P}^n$ associated to the ray generated by $e_i$.

We now let $\Sigma'$ denote the star subdivision of $\Sigma$ along $\sigma_0$, as in Example \ref{starsub}. Then $\Sigma'$ is a refinement of $\Sigma$, and the induced toric morphism 

\begin{equation}\label{toricblowup}
    \pi\colon X_{\Sigma'}\to\mathbb{P}^n
\end{equation} is the blow-up of $\mathbb{P}^n$ at the distinguished point $\gamma_{\sigma_0}$. 

\begin{rmk}
    Since $\PGL_{n+1}$ acts transitively on $\mathbb{P}^n$, making an appropriate change of coordinates shows that any blow-up $X\to\mathbb{P}^n$ at one point $p\in\mathbb{P}^n$ is of the form \eqref{toricblowup}.
\end{rmk}

For each $i=1,\dots,n$, we let \[\tau_i\coloneqq\Cone(u_0,e_1,\dots,\widehat{e}_i,\dots,e_n)\in\Sigma',\] where $u_0\coloneqq e_1+\cdots+e_n\in N$. We note that, by construction, the set of maximal cones of $\Sigma'$ is given by $\Sigma'_{\max}=\{\sigma_1,\dots,\sigma_n,\tau_1,\dots,\tau_n\}$, while the set of ray generators of $\Sigma'$ is equal to $\{u_0,e_0,e_1,\dots,e_n\}$. For each ray generator $u\in\{u_0,e_0,e_1,\dots,e_n\}$, let $D_u$ denote the toric divisor in $X_{\Sigma'}$ associated to the ray generated by $u$; in particular, $D_{u_0}$ is the exceptional divisor. We now compute an explicit formula for the pullback of toric divisors in $\mathbb{P}^n$ via the blow-up morphism $\pi\colon X_{\Sigma'}\to\mathbb{P}^n$.

\begin{rmk}
    Notice the notational choice that we have made. Since the ray generators $e_i$ appear in both the fan $\Sigma$ and the fan $\Sigma'$, we distinguish the associated toric divisors by writing $D_i$ for the divisor in $X_\Sigma\cong\mathbb{P}^n$, while writing $D_{e_i}$ for the divisor in $X_{\Sigma'}$.
\end{rmk}

\begin{lem}\label{pullback}
Let $D=\lambda_0D_0+\cdots+\lambda_nD_n$ be a toric divisor in $\mathbb{P}^n$. Then, the toric divisor \[D'\coloneqq\lambda_0D_{e_0}+\cdots +\lambda_n D_{e_n}+(\lambda_1+\cdots+\lambda_n)D_{u_0}\] in $X_{\Sigma'}$ satisfies $\OO_{X_{\Sigma'}}(D')\cong\pi^*\OO_{\mathbb{P}^n}(D)$.
\end{lem}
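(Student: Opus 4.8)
The plan is to apply Proposition~\ref{cartierpull} directly to the blow-up morphism $\pi\colon X_{\Sigma'}\to\mathbb{P}^n$. Since $\Sigma'$ is the star subdivision of $\Sigma$ along $\sigma_0$, it refines $\Sigma$, so $\pi$ is induced by the identity map $N_\mathbb{R}\to N_\mathbb{R}$; consequently the dual map $\overline{\phi}^*\colon M\to M$ is also the identity. As $\mathbb{P}^n$ is smooth, $D$ is Cartier and has well-defined Cartier data $\{m_{\sigma_0},\dots,m_{\sigma_n}\}$, and Proposition~\ref{cartierpull} will produce a toric Cartier divisor $D_1$ on $X_{\Sigma'}$ with $\OO_{X_{\Sigma'}}(D_1)\cong\pi^*\OO_{\mathbb{P}^n}(D)$. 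The task then reduces to identifying $D_1$ with $D'$, which I would do by comparing the coefficient of each ray generator.

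First I would compute the Cartier data of $D$. Identifying $M=\mathbb{Z}^n$ via the basis dual to $\{e_1,\dots,e_n\}$, so that $\langle(c_1,\dots,c_n),e_i\rangle=c_i$ for $1\leq i\leq n$ while $\langle(c_1,\dots,c_n),e_0\rangle=-(c_1+\cdots+c_n)$, the conditions $\langle m_{\sigma_j},e_i\rangle=-\lambda_i$ for $i\neq j$ are solved by
\[
m_{\sigma_0}=(-\lambda_1,\dots,-\lambda_n),\qquad m_{\sigma_j}=(c_1^{(j)},\dots,c_n^{(j)}),\quad c_i^{(j)}=\begin{cases}-\lambda_i & i\neq j,\\ \sum_{k=0}^n\lambda_k-\lambda_j & i=j,\end{cases}
\]
for $1\leq j\leq n$, where the value of $c_j^{(j)}$ is forced by the equation $\langle m_{\sigma_j},e_0\rangle=-\lambda_0$ together with $e_0=-e_1-\cdots-e_n$.

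Next I would pull these back. Among the maximal cones of $\Sigma'$, each $\sigma_j$ with $1\leq j\leq n$ is already a maximal cone of $\Sigma$, while each $\tau_j$ satisfies $\tau_j\subseteq\sigma_0$ (indeed $u_0=e_1+\cdots+e_n\in\sigma_0$), and $\sigma_0$ is in fact the only maximal cone of $\Sigma$ containing $\tau_j$, since the relation $e_0=-e_1-\cdots-e_n$ forces $u_0\notin\sigma_i$ for $i\geq 1$. Hence, by Proposition~\ref{cartierpull}, $D_1$ is the toric Cartier divisor whose Cartier data assigns $m_{\sigma_j}$ to $\sigma_j$ for $1\leq j\leq n$ and $m_{\sigma_0}$ to each $\tau_j$. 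Writing $D_1=\sum_\rho a_\rho D_\rho$ over the ray generators $u_0,e_0,e_1,\dots,e_n$ of $\Sigma'$ and using $a_\rho=-\langle m_\sigma,u_\rho\rangle$ for any maximal cone $\sigma$ containing $\rho$, one reads off: $a_{e_i}=\lambda_i$ for $1\leq i\leq n$ (pair $m_{\sigma_j}$ with $j\neq i$ against $e_i$); $a_{e_0}=\lambda_0$ (pair $m_{\sigma_1}$ against $e_0$); and $a_{u_0}=\lambda_1+\cdots+\lambda_n$ (pair the datum $m_{\sigma_0}$ at $\tau_1$ against $u_0=e_1+\cdots+e_n$). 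Therefore $D_1=D'$, which is the claim.

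The argument is essentially a bookkeeping exercise built on Proposition~\ref{cartierpull}, and consistency of the Cartier data across overlapping cones is automatic from that proposition. The only point requiring genuine care is the explicit Cartier data of $D$ on $\mathbb{P}^n$: the linear relation $e_0=-e_1-\cdots-e_n$ among the ray generators is what produces the nontrivial $j$-th entry of $m_{\sigma_j}$, and it must be tracked consistently through the pullback — this is where an error would most easily creep in.
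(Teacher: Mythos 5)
Your proof is correct and follows essentially the same route as the paper: compute the Cartier data $\{m_{\sigma_0},\dots,m_{\sigma_n}\}$ of $D$ on $\mathbb{P}^n$, observe that each $\tau_j$ lands in $\sigma_0$ so that Proposition~\ref{cartierpull} assigns $m_{\sigma_0}$ to the cones $\tau_j$ and $m_{\sigma_j}$ to the cones $\sigma_j$, and then read off the coefficients of the resulting divisor via $a_\rho=-\langle m_\sigma,u_\rho\rangle$. The intermediate computations (the explicit characters and the pairing $\langle m_{\sigma_0},u_0\rangle=-\lambda_1-\cdots-\lambda_n$) coincide with those in the paper's proof.
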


\begin{proof}
In $\mathbb{P}^n$ the groups of Weil and Cartier divisors coincide, so $D$ is Cartier and we can determine its Cartier data. Let $s\coloneqq\lambda_0+\cdots+\lambda_n$. For each $i=1,\dots,n$ we define
\[m_i\coloneqq(-\lambda_1,\dots,s-\lambda_i,\dots,-\lambda_n)\in M=\mathbb{Z}^n,\] where the entry equal to $s-\lambda_i$ is the $i$-th entry. We also define \[m_0\coloneqq(-\lambda_1,-\lambda_2,\dots,-\lambda_n)\in M=\mathbb{Z}^n.\] The set $\{m_0,\dots,m_n\}$ equals the Cartier data of $D$, since the equality $\langle m_i,e_j\rangle=-\lambda_j$ is satisfied whenever $i\neq j$. We note that, for every $i=1,\dots,n$, the smallest cone in $\Sigma$ containing $\tau_i\in\Sigma'$ is $\sigma_0$. Then, by Proposition~\ref{cartierpull} the Cartier divisor $D''$ on $X_{\Sigma'}$ whose Cartier data is given by \begin{align*}
    m_{\sigma_i}\coloneqq m_i ,\\
    m_{\tau_i}\coloneqq m_0
\end{align*} for $i=1,\dots,n$, satisfies $\OO_{X_{\Sigma'}}(D'')\cong\pi^*\OO_{\mathbb{P}^n}(D)$. Direct computation now shows that $D''=D'$. Indeed, we have \[\langle m_{\sigma_i}, e_j\rangle =-\lambda_j\] whenever $i\neq j$, and \[\langle m_{\tau_i}, u_0\rangle=-\lambda_1-\cdots-\lambda_n\] for all $i=1,\dots,n$, so \begin{align*}
    D''&=\lambda_0D_{e_0}+\cdots +\lambda_n D_{e_n}+(\lambda_1+\cdots+\lambda_n)D_{u_0}\\
    &= D'.
\end{align*} This finishes the proof.
\end{proof}

   We now wish to describe the Picard groups of $\mathbb{P}^n$ and $X_{\Sigma'}$ by choosing convenient sets of generators given by toric divisors. The group $\Pic(\mathbb{P}^n)\cong\mathbb{Z}$ is generated by the class of any hyperplane: we pick the toric divisor $D_{1}$ as representative and write $\Pic(\mathbb{P}^n)=\mathbb{Z} D_{1}$. Regarding a choice of generators for $\Pic(X_{\Sigma'})$, we note that Proposition~\ref{toricpic} applied to the cone $\sigma_1\in\Sigma'(n)$ gives
   \[\Pic(X_{\Sigma'})=\mathbb{Z}D_{u_0}+\mathbb{Z}D_{e_1}.\] 
   However, a natural choice of generators for $\Pic(X_{\Sigma'})$, using the language of invertible sheaves, is the pullback of $\OO_{\mathbb{P}^n}(1)\cong \OO_{\mathbb{P}^n}(D_1)$ via the blow-up $\pi$, and the ideal sheaf $\OO_{X_{\Sigma'}}(-D_{u_0})$ of the exceptional divisor. In order to write these generators more explicitly, we note that Lemma~\ref{pullback} shows that the toric divisor $D_{u_0}+D_{e_1}\in\Pic(X_{\Sigma'})$ satisfies \[\OO_{X_{\Sigma'}}(D_{u_0}+D_{e_1})\cong\pi^*\OO_{\mathbb{P}^n}(D_1).\] We thus choose $\{-D_{u_0}, D_{u_0}+D_{e_1}\}$ as $\mathbb{Z}$-basis for $\Pic(X_{\Sigma'})$.
   
\begin{thm}\label{nefness1}
    Let $\pi\colon X\to\mathbb{P}^n$ be a blow-up at one point with exceptional divisor $E$, and let $a,b\in\mathbb{Z}$. Then:
    \begin{enumerate}[(i)]
        \item $\OO_X(-aE)\otimes\pi^*\OO_{\mathbb{P}^n}(b)$ is nef if and only if $0\leq a\leq b$.
        \item $\OO_X(-aE)\otimes\pi^*\OO_{\mathbb{P}^n}(b)$ is ample if and only if $0< a< b$.
    \end{enumerate}    
\end{thm}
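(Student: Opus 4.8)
The plan is to move the whole question to the combinatorial side and invoke Theorem~\ref{nefcrit}. As explained before the statement, we may assume $X = X_{\Sigma'}$ and $E = D_{u_0}$, where $X_{\Sigma'}$ is complete and smooth, so every Weil divisor is Cartier. Applying Lemma~\ref{pullback} to the hyperplane $D_0$ on $\mathbb{P}^n$ (that is, $\lambda_0 = 1$, $\lambda_1 = \cdots = \lambda_n = 0$) identifies $\pi^\ast\OO_{\mathbb{P}^n}(1)$ with $\OO_{X_{\Sigma'}}(D_{e_0})$, so the sheaf in question is $\OO_{X_{\Sigma'}}(D)$ for the toric divisor $D \coloneqq b D_{e_0} - a D_{u_0}$. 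It then remains to decide for which $a,b$ the support function $\varphi_D$ is convex, respectively strictly convex.

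First I would compute the Cartier data of $D$. Solving $\langle m_\sigma, u_\rho\rangle = -a_\rho$ over the rays of each maximal cone of $\Sigma'$, using $e_0 = -e_1 - \cdots - e_n$ and $u_0 = e_1 + \cdots + e_n$, yields $m_{\sigma_i} = b\, e_i^\ast$ and $m_{\tau_i} = a\, e_i^\ast$ for $i = 1, \dots, n$, where $\{e_1^\ast, \dots, e_n^\ast\}$ is the basis of $M$ dual to $\{e_1, \dots, e_n\}$; equivalently, $\varphi_D$ is the map $u \mapsto b u_i$ on $\sigma_i$ and the map $u \mapsto a u_i$ on $\tau_i$, where $u_i$ denotes the $i$-th coordinate.

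Next I would enumerate the walls of $\Sigma'$, whose maximal cones are $\sigma_1, \dots, \sigma_n, \tau_1, \dots, \tau_n$. A short check of ray generators shows that the walls are exactly $\sigma_i \cap \sigma_j = \Cone(e_0, e_k : k \neq i, j)$ and $\tau_i \cap \tau_j = \Cone(u_0, e_k : k \neq i, j)$ for $i \neq j$, together with $\sigma_i \cap \tau_i = \Cone(e_k : k \neq i)$ for each $i$; by contrast $\sigma_i \cap \tau_j$ with $i \neq j$ has dimension at most $n-2$ and is not a wall. For each wall I would test the wall inequality $\varphi_D(u) \leq \langle m_\sigma, u\rangle$ at a single lattice vector on the far side — checking one vector suffices, since across each wall the two relevant characters differ by a functional vanishing on it. Taking $u = e_i \in \sigma_j \setminus \sigma_i$ across $\sigma_i \cap \sigma_j$ gives $0 \leq b$; taking $u = e_i \in \tau_j \setminus \tau_i$ across $\tau_i \cap \tau_j$ gives $0 \leq a$; and taking $u = u_0 \in \tau_i \setminus \sigma_i$ across $\sigma_i \cap \tau_i$ gives $a \leq b$. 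Hence $\varphi_D$ is convex precisely when $0 \leq a \leq b$ and strictly convex precisely when $0 < a < b$, and both assertions follow from Theorem~\ref{nefcrit}.

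The only delicate point is the combinatorial bookkeeping inside $\Sigma'$: recognizing that $\sigma_i \cap \tau_i$ is a genuine wall while $\sigma_i \cap \tau_j$ for $i \neq j$ is not, and keeping the signs straight when pairing characters against $e_0$ and $u_0$. Everything else is routine integer linear algebra.
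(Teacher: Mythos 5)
Your proposal is correct and takes essentially the same route as the paper: identify the sheaf with $\OO_{X_{\Sigma'}}(D)$ for a toric divisor via Lemma~\ref{pullback}, compute the Cartier data, and read off nefness and ampleness from the (strict) wall inequalities via Theorem~\ref{nefcrit}. The only difference is the choice of linearly equivalent representative ($D=bD_{e_0}-aD_{u_0}$ instead of the paper's $(b-a)D_{u_0}+bD_{e_1}$), which makes your Cartier data slightly cleaner but does not change the substance of the argument.
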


\begin{proof}
    We view $\mathbb{P}^n$ as the toric variety associated to the fan $\Sigma$ as defined in \eqref{fanpn}, and we identify $\pi$ with the toric morphism $\pi\colon X_{\Sigma'}\to\mathbb{P}^n$ induced by star subdivision as in \eqref{toricblowup}, by choosing coordinates so that the blown-up point matches the distinguished point $\gamma_{\sigma_0}$. Then, by Lemma~\ref{pullback} we have
    \[\OO_{X_{\Sigma'}}(-aE)\otimes\pi^*\OO_{\mathbb{P}^n}(b)\cong \OO_{X_{\Sigma'}}(-aD_{u_0}+b(D_{u_0}+D_{e_1})).\]
    Let us first compute, directly from the definition, the Cartier data \[\{m_\sigma\mid\sigma\in\Sigma'_{\max}\}=\{m_{\sigma_1},\dots,m_{\sigma_n},m_{\tau_1},\dots,m_{\tau_n}\}\subseteq M=\mathbb{Z}^n\] of the toric Cartier divisor $D\coloneqq -aD_{u_0}+b(D_{u_0}+D_{e_1})=(b-a)D_{u_0}+bD_{e_1}$.

    Let $i\in\{2,\dots,n\}$. The equalities \begin{align*}
        \langle m_{\sigma_1},e_i\rangle &= 0,\\
        \langle m_{\sigma_1},e_0\rangle &= 0
        \end{align*}
    imply that $m_{\sigma_1}=(0,\dots,0)$ is the zero character, while the equalities
    \begin{align*}
       \ \ \  \ \ \ \ \ \ \ \ \ \ \ \ \ \ \ \ \ \ \ \ \ \ \ \ \ \ \ \ \ \ \langle m_{\sigma_i},e_0\rangle &= 0,\\
        \langle m_{\sigma_i},e_1\rangle &= -b,\\
       \langle m_{\sigma_i},e_j\rangle &= 0 \qquad\forall\quad j\in\{2,\dots,n\}\setminus\{i\}
        \end{align*} imply that $m_{\sigma_i}=(-b,0,\dots,0,b,0,\dots,0)$, where $b$ occurs at the $i$-th entry. Furthermore, the equalities \begin{align*}
       \ \ \ \  \langle m_{\tau_1},e_i\rangle &= 0,\\
        \langle m_{\tau_1},u_0\rangle &= a-b
    \end{align*} imply that $m_{\tau_1}=(a-b,0,\dots,0)$, while the equalities
     \begin{align*}
        \ \ \  \ \ \ \ \ \ \ \ \ \ \ \ \ \ \ \ \ \ \ \ \ \ \ \ \ \ \ \ \ \ \langle m_{\tau_i},u_0\rangle &= a-b,\\
        \langle m_{\tau_i},e_1\rangle &= -b,\\
        \langle m_{\tau_i},e_j\rangle &= 0 \qquad\forall\quad j\in\{2,\dots,n\}\setminus\{i\}
    \end{align*} imply that $m_{\tau_i}=(-b,0,\dots,0,a,0,\dots,0)$, where $a$ occurs at the $i$-th entry.

    Suppose that $D$ is nef, so its support function $\varphi_D\colon \lvert\Sigma'\rvert\to\mathbb{R}$ is convex by Theorem~\ref{nefcrit}. In particular, $e_0\in\sigma_1\setminus\tau_1$ must satisfy the wall inequality \[0=\varphi_D(e_0)\leq\langle m_{\tau_1},e_0\rangle=b-a,\] and $e_2\in\tau_1\setminus\tau_2$ must satisfy the wall inequality \[0=\varphi_D(e_2)\leq\langle m_{\tau_2},e_2\rangle=a.\] Therefore $0\leq a\leq b$.

    Now suppose that $0\leq a\leq b$. Then all the wall inequalities are satisfied, namely:
    \begin{align*}
        -b&=\varphi_D(e_1)\leq\langle m_{\sigma_1},e_1\rangle=0,\\
        -b&=\varphi_D(e_1)\leq\langle m_{\tau_1},e_1\rangle=a-b,\\
        0&=\varphi_D(e_i)\leq\langle m_{\sigma_i},e_i\rangle=b \ \ \quad\qquad\forall\quad 2\leq i\leq n,\\
        0&=\varphi_D(e_i)\leq\langle m_{\tau_i},e_i\rangle=a \ \ \quad\qquad\forall\quad 2\leq i\leq n,\\
        0&=\varphi_D(e_0)\leq\langle m_{\tau_i},e_0\rangle=b-a \ \ \>  \quad\forall\quad 1\leq i\leq n,\\
        a-b&=\varphi_D(u_0)\leq\langle m_{\sigma_i},u_0\rangle=0 \ \quad\qquad\forall\quad 1\leq i\leq n,
    \end{align*} so $\varphi_D$ is convex and $D$ is nef. The corresponding statement about ampleness follows analogously, by arguing with strict convexity instead of convexity.
\end{proof}

We now turn our attention to the cohomology groups $H^i(X_{\Sigma'},\OO_{X_{\Sigma'}}(-aE)\otimes\pi^*\OO_{\mathbb{P}^n}(b))$, where $a,b\in\mathbb{Z}$. As an immediate consequence of Theorem~\ref{Demazure} and Theorem~\ref{nefness1}, if $0\leq a\leq b$, we then have \[H^i(X_{\Sigma'},\OO_{X_{\Sigma'}}(-aE)\otimes\pi^*\OO_{\mathbb{P}^n}(b))=0 \] for all $i>0$. However, we can obtain a better result by using the Kodaira vanishing theorem.

\begin{thm}\label{kodairaforpn1}
    Let $\pi\colon X\to\mathbb{P}^n$ be a blow-up at one point, with exceptional divisor $E$, and let $a,b\in\mathbb{Z}$ satisfying $0\leq a\leq b+1$. Then \[H^i(X,\OO_X(-aE)\otimes\pi^*\OO_{\mathbb{P}^n}(b))=0\] for all $i>0$.
\end{thm}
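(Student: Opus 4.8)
The plan is to reduce the statement to the Kodaira vanishing theorem for the toric variety $X_{\Sigma'}$. The obvious obstruction is that $\OO_X(-aE)\otimes\pi^*\OO_{\mathbb{P}^n}(b)$ need not itself be of the form $\OO_{X_{\Sigma'}}(A+K_{X_{\Sigma'}})$ with $A$ ample; instead, I would look for an ample toric divisor $A$ such that $A+K_{X_{\Sigma'}}$ is linearly equivalent to $-aD_{u_0}+b(D_{u_0}+D_{e_1})$, and then invoke the toric Kodaira vanishing theorem stated above. First I would compute a torus-invariant canonical divisor on $X_{\Sigma'}$. Since the ray generators of $\Sigma'$ are $\{u_0,e_0,e_1,\dots,e_n\}$, the formula $K_{X_{\Sigma'}}=-\sum_{\rho}D_\rho$ gives $K_{X_{\Sigma'}}=-D_{u_0}-D_{e_0}-D_{e_1}-\cdots-D_{e_n}$. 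Using the relations in $\Pic(X_{\Sigma'})$ coming from the exact sequence $M\to\Div_{T_N}(X_{\Sigma'})\to\Cl(X_{\Sigma'})\to 0$ — concretely, $\ddiv(e_i^*)$ expresses each $D_{e_i}$ for $i\geq 2$ and $D_{e_0}$ in terms of the chosen basis $\{-D_{u_0},\,D_{u_0}+D_{e_1}\}$ — I would rewrite $K_{X_{\Sigma'}}$ in that basis. One expects the answer $K_{X_{\Sigma'}} \sim (n-1)E - (n+1)\pi^*H$ in the $(E,\pi^*H)$ coordinates, matching the classical blow-up formula $K_X=\pi^*K_{\mathbb{P}^n}+(n-1)E$.

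Next, given $a,b$ with $0\leq a\leq b+1$, I would write $\OO_X(-aE)\otimes\pi^*\OO_{\mathbb{P}^n}(b) \cong \OO_{X_{\Sigma'}}(A + K_{X_{\Sigma'}})$ where $A$ is the toric divisor corresponding, in the $(E,\pi^*H)$ basis, to $-aE + b\,\pi^*H - K_{X_{\Sigma'}} = -(a-(n-1))E + (b+(n+1))\pi^*H$; that is, $A = -a'E + b'\pi^*H$ with $a' = a-(n-1)$ and $b' = b+n+1$. By Theorem~\ref{nefness1}(ii), $A$ is ample precisely when $0 < a' < b'$, i.e. $n-1 < a < b+n+1$. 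This handles the range $n-1 < a \le b+1$ directly. The remaining cases are $a \le n-1$; here $A$ as defined above may fail to be ample, so a single choice of $A$ will not suffice, and this is the main obstacle to a one-line argument.

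To cover all of $0\leq a\leq b+1$, I would split into cases according to the size of $a$. If $a\leq 0$, then $0\le -a\le -a$ trivially and in fact $0\le a\le b$ forces... more cleanly: for $a\le 0$ and $a\le b+1$ we still have $-aE+b\pi^*H$, and one checks via Theorem~\ref{nefness1} combined with Demazure vanishing (Theorem~\ref{Demazure}) — valid since $|\Sigma'|=\mathbb{R}^n$ is convex — that when $0\le a\le b$ the bundle is nef and all higher cohomology vanishes outright. So the genuinely new input is the edge case $a=b+1$ together with small $a$. For these, rather than a bare Kodaira application I would perturb: for any $\varepsilon$ with $1\le\varepsilon$, replacing $b$ by $b+\varepsilon$ and $a$ by $a+\varepsilon$ keeps $a\le b+1$, and for $\varepsilon$ large enough the ampleness constraint $n-1<a+\varepsilon<b+\varepsilon+n+1$ is met; but this changes the bundle, so instead the correct move is: choose $A' = -aE + (b+1)\pi^*H + (\text{multiple of }\pi^*H)$ added to compensate, noting that $A' + K$ differs from the target by a nef (indeed basepoint-free) pullback $\pi^*\OO(\text{nonneg})$, and combine Kodaira vanishing for the ample part with the projection formula and vanishing on $\mathbb{P}^n$. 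The cleanest execution, which I would adopt, is: write the target as $(A+K_{X_{\Sigma'}}) \otimes \pi^*\OO_{\mathbb{P}^n}(c)$ with $c\geq 0$ chosen so that $A=-a'E+(b'-c)\pi^*H$ is ample (possible since decreasing the $\pi^*H$-coefficient while $c\ge0$ only needs $b'-c>a'$, and we have freedom as long as $a' = a-(n-1)$; when $a\le n-1$ we have $a'\le 0$ and must instead increase $a'$, handled symmetrically), then apply Kodaira vanishing to get $H^i(X,(A+K)\otimes\pi^*\OO(c))=0$, but since tensoring by $\pi^*\OO(c)$ with $c>0$ does not follow formally, I would in the write-up simply present the case analysis $a\le 0$ (Demazure), $1\le a\le n-1$, and $n\le a\le b+1$, exhibiting in each a concrete ample $A$ with $A+K_{X_{\Sigma'}}$ linearly equivalent to the target bundle, the middle range being absorbed into the last once one observes $b\ge a-1\ge 0$ forces $b'=b+n+1>a'$ automatically whenever $a'>0$. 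The main obstacle is thus purely bookkeeping: verifying that the inequality $0\le a\le b+1$ is exactly what makes some translate $A=-a'E+b'\pi^*H$ with $a'=a-(n-1)$, $b'=b+(n+1)$ satisfy $0<a'<b'$ or else reduces to the nef/Demazure case, and assembling these into a clean statement.
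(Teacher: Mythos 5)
Your overall strategy (compute a torus-invariant canonical divisor, write the target sheaf as $\OO_{X_{\Sigma'}}(K_{X_{\Sigma'}}+A)$, and apply toric Kodaira vanishing with the ampleness criterion of Theorem~\ref{nefness1}) is the paper's strategy, and your canonical divisor $K_{X_{\Sigma'}}\sim (n-1)E-(n+1)\pi^*H$ is correct. But there is a sign error at the decisive step. Writing $H$ for a hyperplane class and the target divisor as $D=-aE+b\,\pi^*H$, the ample part must be
\[
A \;=\; D-K_{X_{\Sigma'}} \;=\; -aE+b\,\pi^*H-\bigl((n-1)E-(n+1)\pi^*H\bigr)\;=\;-(a+n-1)E+(b+n+1)\,\pi^*H,
\]
so in your notation $a'=a+(n-1)$, not $a-(n-1)$. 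With the correct sign, Theorem~\ref{nefness1}(ii) gives ampleness of $A$ exactly when $0<a+n-1<b+n+1$, i.e.\ $a>1-n$ (automatic, since $a\geq 0$ and $n\geq 3$) and $a\leq b+1$, which is precisely the hypothesis. Thus a single application of Kodaira vanishing covers the whole range $0\leq a\leq b+1$, with no case analysis; this is exactly how the paper concludes.

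Because of the sign error you believed the argument only works for $n-1<a\leq b+1$, and the machinery you then propose to handle $a\leq n-1$ (perturbing coefficients, splitting off a factor $\pi^*\OO_{\mathbb{P}^n}(c)$ and invoking the projection formula, then a three-case split) does not amount to a proof: you yourself note that vanishing after tensoring by $\pi^*\OO_{\mathbb{P}^n}(c)$ ``does not follow formally,'' and the claim that the middle range is ``absorbed into the last'' rests on the erroneous value of $a'$. There is also a small genuine gap in your fallback: the case $a=0$, $b=-1$ satisfies $0\leq a\leq b+1$ but the bundle $\pi^*\OO_{\mathbb{P}^n}(-1)$ is not nef, so Demazure vanishing (Theorem~\ref{Demazure}) does not apply there, whereas the corrected Kodaira argument handles it. Fixing the sign collapses your proposal to the paper's proof and removes all of these issues.
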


\begin{proof}
    As in the proof of Theorem~\ref{nefness1}, we view $\mathbb{P}^n$ as the toric variety associated to $\Sigma$, we identify $\pi$ with the toric morphism $\pi\colon X_{\Sigma'}\to\mathbb{P}^n$, and we set \[D\coloneqq -aD_{u_0}+b(D_{u_0}+D_{e_1}).\] Hence, we wish to show that $H^i(X_{\Sigma'},\OO_{X_{\Sigma'}}(D))=0$ for all $i>0$.

    According to \cite[Theorem~8.2.3]{CLS11}, the canonical sheaf $\omega_{X_{\Sigma'}}$ is given by
    \[\omega_{X_{\Sigma'}}\cong\OO_{X_{\Sigma'}}(-D_{u_0}-D_{e_0}-\cdots-D_{e_n}).\] We choose a different representative $K_{X_{\Sigma'}}$ of the canonical divisor class, as follows. We consider the character $m\coloneqq(-n,1,\dots,1)\in M$, and we set
    \begin{align*}
        K_{X_{\Sigma'}}\coloneqq&\ddiv(m)-D_{u_0}-D_{e_0}-\cdots-D_{e_n}\\
        =&(\langle m,u_0\rangle -1)D_{u_0}+(\langle m,e_0\rangle -1)D_{e_0}+\cdots + (\langle m,e_n\rangle -1)D_{e_n}\\
        =&-2D_{u_0}-(n+1)D_{e_1}\\
        =&(n-1)D_{u_0}-(n+1)(D_{u_0}+D_{e_1}).
    \end{align*}
   We then notice that the toric divisor
   \[D-K_{X_{\Sigma'}}=-(a+n-1)D_{u_0}+(b+n+1)(D_{u_0}+D_{e_1})\] is ample by Theorem~\ref{nefness1}, since
   \[0\leq a\leq b+1\implies 0<a+n-1<b+n+1.\] Therefore, $H^i(X_{\Sigma'},\OO_{X_{\Sigma'}}(D))=0$ for all $i>0$ by the Kodaira vanishing theorem.
\end{proof}

We will end this section by characterizing the cohomological vanishing of Theorem~\ref{kodairaforpn1} in the case $i=1$. For this, we will directly compute the dimension of the singular cohomology groups $\widetilde{H}^{0}(V_{D,m},\mathbb{C})$, and then apply Theorem~\ref{torcoh}.

\begin{lem}\label{pathconn}
    Let $D=\sum_{\rho\in\Sigma'(1)} a_\rho D_\rho$ be any toric divisor on $X_{\Sigma'}$, and let $m\in M$ be a character. If \[V_{D,m}=\bigcup_{\sigma\in\Sigma'_{\max}}\Conv(u_\rho\mid\rho\in\sigma(1), \langle m,u_\rho\rangle<-a_\rho)\subseteq\mathbb{R}^n\] is not path-connected, then $V_{D,m}=\{e_0,u_0\}$.
\end{lem}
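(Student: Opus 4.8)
The plan is to recast the whole question as finite combinatorics of the $2n$ maximal cones of $\Sigma'$ together with one finite ``active'' set of ray generators. Fix the character $m$. Call a ray generator $u\in\{u_0,e_0,e_1,\dots,e_n\}$ \emph{active} if $\langle m,u\rangle<-a_\rho$ where $\rho=\Cone(u)$, and let $A$ be the set of active generators. By definition $V_{D,m}=\bigcup_{\sigma\in\Sigma'_{\max}}\Conv\big(A\cap\sigma(1)\big)$, and each piece $\Conv(A\cap\sigma(1))$ is the convex hull of a finite set, hence a convex — in particular path-connected — subset of $\mathbb{R}^n$, and it is empty exactly when $A\cap\sigma(1)=\emptyset$. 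If $A=\emptyset$ then $V_{D,m}=\emptyset$, which is (vacuously) path-connected, so we may assume $A\neq\emptyset$. The observation that drives the argument: if two maximal cones $\sigma,\sigma'$ share an active generator $u\in A\cap\sigma(1)\cap\sigma'(1)$, then both pieces contain the point $u$, so their union is path-connected; hence it suffices to join every nonempty piece to every other through a chain of pieces with common active vertices.

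Next I would exploit that each ``central'' generator $e_i$ ($1\le i\le n$) lies in all but two of the maximal cones: recalling $\sigma_i(1)=\{e_0\}\cup\{e_k : k\neq i\}$ and $\tau_i(1)=\{u_0\}\cup\{e_k : k\neq i\}$, suppose $e_i\in A$ for some $i$. Then $e_i$ is a common vertex of the pieces of all $\sigma_j$ and $\tau_j$ with $j\neq i$, so the union $P$ of those $2(n-1)$ pieces is path-connected. It remains to attach the pieces of $\sigma_i$ and $\tau_i$, the only maximal cones missing $e_i$. If the piece of $\sigma_i$ is nonempty it has an active vertex $u\in\{e_0\}\cup\{e_k : k\neq i\}$; since $e_0$ lies in every $\sigma_j(1)$ and $e_k$ lies in $\sigma_j(1)$ for all $j\neq k$, one checks — invoking $n\geq 3$ to choose an index $j\notin\{i,k\}$ when $u=e_k$ — that $u$ is also a vertex of some piece already in $P$, so the piece of $\sigma_i$ attaches to $P$. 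The piece of $\tau_i$ attaches the same way, now using that $u_0$ lies in every $\tau_j(1)$. Hence $V_{D,m}=P$ is path-connected, contradicting the hypothesis; therefore $A$ contains no central generator, i.e.\ $A\subseteq\{e_0,u_0\}$.

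It then remains to run through the three possibilities for $A$. If $A=\{e_0\}$, the piece of each $\sigma_j$ is the single point $e_0$ while the piece of each $\tau_j$ is empty (as $e_0\notin\tau_j(1)$), so $V_{D,m}=\{e_0\}$ is path-connected; symmetrically $A=\{u_0\}$ gives $V_{D,m}=\{u_0\}$. In the remaining case $A=\{e_0,u_0\}$, every piece of a $\sigma_j$ equals $\{e_0\}$ and every piece of a $\tau_j$ equals $\{u_0\}$ (no maximal cone contains both $e_0$ and $u_0$), whence $V_{D,m}=\{e_0,u_0\}$, which is not path-connected since $e_0=-u_0\neq u_0$. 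Thus if $V_{D,m}$ fails to be path-connected it must equal $\{e_0,u_0\}$.

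The only step that is not pure bookkeeping with the explicit ray lists of the $2n$ maximal cones of $\Sigma'$ is the attachment argument in the second paragraph — ensuring the pieces of $\sigma_i$ and $\tau_i$ cannot float off disconnected from the hub $P$ — and this is exactly where the standing hypothesis $n\geq 3$ is used. Everything else reduces to the facts that $e_0$ lies in precisely the cones $\sigma_1,\dots,\sigma_n$, that $u_0$ lies in precisely the cones $\tau_1,\dots,\tau_n$, and that each $e_i$ lies in every maximal cone except $\sigma_i$ and $\tau_i$.
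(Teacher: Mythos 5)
Your proof is correct and follows essentially the same route as the paper's: the key step in both is that an active central generator $e_i$ ($1\leq i\leq n$) lies, together with any other potentially active generator, in a common maximal cone (this is where $n\geq 3$ enters), forcing path-connectedness, after which $V_{D,m}\subseteq\{e_0,u_0\}$ and the finitely many remaining cases are checked directly. Your bookkeeping via the ``active set'' and gluing of convex pieces at shared vertices is a slightly more systematic phrasing of the paper's segment argument, but not a different proof.
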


\begin{proof}
    Recall that $\Sigma'_{\max}=\{\sigma_1,\dots,\sigma_n,\tau_1,\dots,\tau_n\}$, and $\Sigma'(1)$ consists of the rays generated by $u_0,e_0,\dots,e_n\in N=\mathbb{Z}^n$. Let us assume that $V_{D,m}$ is not path-connected. 
    
    Suppose that $e_i\in V_{D,m}$ for some $1\leq i\leq n$, and let $p\in V_{D,m}$ be any point. From the definition of $V_{D,m}$, we note that there is a line segment fully contained in $V_{D,m}$ connecting $p$ to some ray generator $v\in \{u_0,e_0,\dots,e_n\}$. Since $n\geq 3$, we can choose a maximal cone $\sigma\in\Sigma'_{\max}$ containing both $e_i$ and $v$. Indeed, if $1\leq j\leq n$, we choose $1\leq k\leq n$ with $k\neq i,j$ and we find $e_i,e_j\in\tau_k$, whereas for $v=e_0$ and $v=u_0$ we can choose $1\leq j\leq n$ with $j\neq i$ and we find $e_0,e_i\in\sigma_j$ and $u_0,e_i\in\tau_j$. Therefore, again by the definition of $V_{D,m}$, we deduce that the line segment between $e_i$ and $v$ is fully contained in $V_{D,m}$. This shows that every point $p\in V_{D,m}$ admits a path in $V_{D,m}$ to $e_i$, contradicting the assumption that $V_{D,m}$ is not path connected. Therefore $e_i\notin V_{D,m}$ for every $1\leq i\leq n$.
    
    Now, if only one of the points $e_0,u_0$ belongs to $V_{D,m}$, then $V_{D,m}$ consists of that single point, which is again a contradiction. Therefore $e_0,u_0\in V_{D,m}$. Since there is no cone in $\Sigma'_{\max}$ containing both $e_0$ and $u_0$, we conclude that $V_{D,m}=\{e_0,u_0\}$.
\end{proof}

\begin{thm}\label{cohp1}
Let $D=\lambda_0D_{e_0}+\cdots+\lambda_n D_{e_n}+\lambda_{n+1}D_{u_0}$ be a toric divisor on $X_{\Sigma'}$. Then, $\dim_{\mathbb{C}} H^1(X_{\Sigma'},\mathcal{O}_{X_{\Sigma'}}(D))$ is equal to the cardinality of the set
\[\big\{ (m_1,\dots,m_n)\in\mathbb{Z}^n\mid \lambda_0<m_1+\cdots+m_n<-\lambda_{n+1}, m_i\geq -\lambda_i \ \ \forall i=1,\dots,n \big\}.\]
\end{thm}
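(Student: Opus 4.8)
The strategy is to apply Theorem~\ref{torcoh} with $i=1$, which gives
\[H^1(X_{\Sigma'},\OO_{X_{\Sigma'}}(D))_m\cong\widetilde H^0(V_{D,m},\mathbb{C})\]
for every $m\in M$. Since $\widetilde H^0$ of a topological space is zero if the space is empty or path-connected, and has dimension one less than the number of path-components otherwise, the first task is to invoke Lemma~\ref{pathconn}: the only way $V_{D,m}$ fails to be empty or path-connected is when $V_{D,m}=\{e_0,u_0\}$, a two-point set, for which $\widetilde H^0$ is one-dimensional. Hence $\dim_{\mathbb C}H^1(X_{\Sigma'},\OO_{X_{\Sigma'}}(D))_m$ equals $1$ if $V_{D,m}=\{e_0,u_0\}$ and $0$ otherwise, so that
\[\dim_{\mathbb C}H^1(X_{\Sigma'},\OO_{X_{\Sigma'}}(D))=\#\{m\in M\mid V_{D,m}=\{e_0,u_0\}\}.\]
It then remains to translate the condition $V_{D,m}=\{e_0,u_0\}$ into the stated system of inequalities on $m=(m_1,\dots,m_n)$.

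First I would recall that $\Sigma'_{\max}=\{\sigma_1,\dots,\sigma_n,\tau_1,\dots,\tau_n\}$ with ray generators $u_0,e_0,e_1,\dots,e_n$, and that $V_{D,m}$ is the union over these maximal cones $\sigma$ of the convex hulls of the ray generators $u_\rho$ of $\sigma$ with $\langle m,u_\rho\rangle<-a_\rho$. Writing $a_{e_i}=\lambda_i$ and $a_{u_0}=\lambda_{n+1}$, and using $\langle m,e_i\rangle=m_i$ for $1\le i\le n$, $\langle m,e_0\rangle=-(m_1+\cdots+m_n)$, and $\langle m,u_0\rangle=m_1+\cdots+m_n$, I would characterize exactly when each ray generator is "active", i.e. lies in $V_{D,m}$:
\begin{itemize}
\item $e_i\in V_{D,m}$ forces $m_i<-\lambda_i$ (for $1\le i\le n$);
\item $e_0\in V_{D,m}$ forces $-(m_1+\cdots+m_n)<-\lambda_0$, i.e. $m_1+\cdots+m_n>\lambda_0$;
\item $u_0\in V_{D,m}$ forces $m_1+\cdots+m_n<-\lambda_{n+1}$.
\end{itemize}
Now $V_{D,m}=\{e_0,u_0\}$ holds precisely when $e_0$ and $u_0$ are active but no $e_i$ is active, \emph{and} additionally no edge of any maximal cone spanning two active vertices appears; but since $e_0$ and $u_0$ never lie in a common maximal cone, activeness of $e_0,u_0$ alone — with all $e_i$ inactive — already yields exactly the set $\{e_0,u_0\}$ with no segment between them. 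Conversely, if $V_{D,m}$ is nonempty and not path-connected, Lemma~\ref{pathconn} forces it to be $\{e_0,u_0\}$, so I must also check that whenever $e_0,u_0$ are active and all $e_i$ inactive, $V_{D,m}$ cannot accidentally be path-connected — which is clear, as it is literally the two-point set. Combining the three bullet conditions gives
\[\lambda_0<m_1+\cdots+m_n<-\lambda_{n+1},\qquad m_i\ge-\lambda_i\ \ \forall\,i=1,\dots,n,\]
where the inequalities $m_i\ge-\lambda_i$ are exactly the negations of $m_i<-\lambda_i$. Counting such $m$ gives the claimed cardinality.

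The main obstacle — though it is more bookkeeping than difficulty — is making the passage from "$V_{D,m}=\{e_0,u_0\}$" to the inequality system airtight: one must be careful that requiring $e_0,u_0\in V_{D,m}$ and $e_i\notin V_{D,m}$ for all $i$ genuinely forces $V_{D,m}$ to equal $\{e_0,u_0\}$ and not something larger (it cannot, since those are the only ray generators and hence the only vertices of the convex hulls involved), and that the degenerate possibilities where $V_{D,m}$ is a single point or empty contribute zero to $\widetilde H^0$ — which is where Lemma~\ref{pathconn} does the real work by ruling out any other disconnected configuration. Once this dictionary is set up, the conclusion is immediate.
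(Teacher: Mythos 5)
Your proposal is correct and follows essentially the same route as the paper: decompose $H^1$ by the $M$-grading, apply Theorem~\ref{torcoh} in degree $i=1$, use Lemma~\ref{pathconn} to reduce to counting characters $m$ with $V_{D,m}=\{e_0,u_0\}$, and translate that condition into the stated inequalities via $\langle m,e_0\rangle=-(m_1+\cdots+m_n)$ and $\langle m,u_0\rangle=m_1+\cdots+m_n$. The bookkeeping you flag (that activeness of $e_0,u_0$ with all $e_i$ inactive yields exactly the two-point set) is handled the same way, implicitly, in the paper's proof.
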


\begin{proof}
Since \[H^1(X_{\Sigma'},\OO_{X_{\Sigma'}}(D))=\bigoplus_{m\in M}H^1(X_{\Sigma'},\OO_{X_{\Sigma'}}(D))_m\] and \[H^1(X_{\Sigma'},\OO_{X_{\Sigma'}}(D))_m\cong\widetilde{H}^{0}(V_{D,m},\mathbb{C})\] for all $m\in M$ by Theorem~\ref{torcoh}, we have \[\dim_{\mathbb{C}}H^1(X_{\Sigma'},\OO_{X_{\Sigma'}}(D))=\sum_{m\in M}\dim_{\mathbb{C}} \widetilde{H}^{0}(V_{D,m},\mathbb{C}). \]
By Lemma~\ref{pathconn}, for each $m\in M$ we have \[\dim_{\mathbb{C}}\widetilde{H}^{0}(V_{D,m},\mathbb{C})\geq 1 \iff V_{D,m}=\{e_0,u_0\},\] and in this case $\dim_{\mathbb{C}}\widetilde{H}^{0}(V_{D,m},\mathbb{C})= 1$ because $\{e_0,u_0\}$ has two connected components. Therefore, \[\dim_{\mathbb{C}}H^1(X_{\Sigma'},\OO_{X_{\Sigma'}}(D))=\#\big\{m\in M\mid V_{D,m}=\{e_0,u_0\}\big\}.\] Let $m=(m_1,\dots,m_n)\in \mathbb{Z}^n$. By the definition of the set $V_{D,m}$, we have $V_{D,m}=\{e_0,u_0\}$ if and only if the inequalities
\begin{align*}
    -m_1-\cdots -m_n&=\langle m,e_0 \rangle <-\lambda_0, \\
    m_1+\cdots +m_n&=\langle m,u_0 \rangle <-\lambda_{n+1}, \\
    m_i&=\langle m,e_i \rangle \geq-\lambda_i \qquad \forall\quad i=1,\dots,n
\end{align*} are satisfied. Therefore, $\dim_{\mathbb{C}}H^1(X_{\Sigma'},\OO_{X_{\Sigma'}}(D))$ is equal to  \[\# \big\{ (m_1,\dots,m_n)\in\mathbb{Z}^n\mid \lambda_0<m_1+\cdots+m_n<-\lambda_{n+1} \textnormal{ and } m_i\geq -\lambda_i \ \ \forall i=1,\dots,n \big\}.\] This finishes the proof.
\end{proof}

    \begin{cor}\label{char1}
    Let $\pi\colon X\to\mathbb{P}^n$ be a blow-up at one point, with exceptional divisor $E$, and let $a,b\in\mathbb{Z}$. Then
    \[H^1(X,\OO_X(-aE)\otimes\pi^*\OO_{\mathbb{P}^n}(b))=0 \iff a\leq 0 \textnormal{ or } a\leq b+1.\]
\end{cor}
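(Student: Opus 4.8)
The plan is to translate the statement into the combinatorial language of Theorem~\ref{cohp1} and then dispose of a single exceptional case separately. By Lemma~\ref{pullback}, the invertible sheaf $\OO_X(-aE)\otimes\pi^*\OO_{\mathbb{P}^n}(b)$ corresponds to the toric divisor $D\coloneqq -aD_{u_0}+b(D_{u_0}+D_{e_1})=bD_{e_1}+(b-a)D_{u_0}$ on $X_{\Sigma'}$, so in the notation of Theorem~\ref{cohp1} we have $\lambda_0=0$, $\lambda_1=b$, $\lambda_2=\cdots=\lambda_n=0$, and $\lambda_{n+1}=b-a$. Thus $H^1$ vanishes if and only if the set
\[
S\coloneqq\big\{(m_1,\dots,m_n)\in\mathbb{Z}^n\mid 0<m_1+\cdots+m_n<a-b,\ m_1\geq -b,\ m_2,\dots,m_n\geq 0\big\}
\]
is empty. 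The strategy is then to show that $S\neq\emptyset$ precisely when $a>0$ \emph{and} $a>b+1$, i.e. $a\geq b+2$ and $a\geq 1$.

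First I would handle the easy implications. If $a\leq 0$, then since $m_i\geq 0$ for $i\geq 2$ and $m_1\geq -b$, any point of $S$ would satisfy $m_1+\cdots+m_n\geq -b$; but it also needs $m_1+\cdots+m_n<a-b\leq -b$, a contradiction, so $S=\emptyset$. If instead $a\leq b+1$, then $a-b\leq 1$, so the strict inequality $0<m_1+\cdots+m_n<a-b\leq 1$ is vacuous over the integers, whence $S=\emptyset$. This proves the ``$\Longleftarrow$'' direction. For ``$\Longrightarrow$'', I would prove the contrapositive: assume $a\geq 1$ and $a\geq b+2$, and exhibit an explicit element of $S$. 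Since $a-b\geq 2$, the integer $1$ lies strictly between $0$ and $a-b$, so it suffices to find $(m_1,\dots,m_n)$ with $m_1+\cdots+m_n=1$, $m_1\geq -b$ and $m_2,\dots,m_n\geq 0$. Here there is a small case split on the sign of $b$: if $b\geq -1$ take $m_1=1$ and $m_2=\cdots=m_n=0$ (valid since $1\geq -b$); if $b\leq -2$, so $-b\geq 2$, take $m_1=-b$, $m_2=b+1\geq 0$ wait—that is negative. Instead, when $-b\geq 2$ one can distribute: take $m_1=-b\geq 2$ and then we need $m_2+\cdots+m_n=1-(-b)=1+b\leq -1<0$, which is impossible with nonnegative entries, so this does not work directly; the clean fix is to note that when $b\leq -2$ the constraint $m_1\geq -b$ is the binding one, and simply verify instead that $S\ni(m_1,0,\dots,0)$ is impossible but some interior point works: actually the correct observation is that $m_1+\cdots+m_n\geq m_1\geq -b\geq 2> 1$, so no sum equal to $1$ exists, but a sum equal to $-b$ itself works provided $-b<a-b$, i.e. $a>0$, which holds; take $m_1=-b$, $m_2=\cdots=m_n=0$, giving sum $-b$ with $0<-b<a-b$. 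Hence in all cases $S\neq\emptyset$.

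So the real content, and the only place requiring care, is the book-keeping across the sign of $b$: when $b\geq 0$ the point $(1,0,\dots,0)$ already lies in $S$ (using $a-b\geq 2$), while when $b<0$ one instead uses the point $(-b,0,\dots,0)$, whose coordinate sum $-b$ satisfies $0<-b$ and $-b<a-b$ exactly because $a\geq 1>0$ and (trivially) $b<a$. I expect the main obstacle to be purely organizational: making sure the single-point witness is chosen in $S$ for every $(a,b)$ with $a\geq\max\{1,b+2\}$, and correctly phrasing why the two hypotheses ``$a\leq 0$'' and ``$a\leq b+1$'' are exactly the complements of the two numerical conditions $a\geq 1$ and $a\geq b+2$ that force $S\neq\emptyset$. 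No genuinely hard step arises; the corollary is a direct unwinding of Theorem~\ref{cohp1}.
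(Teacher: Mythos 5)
Your proposal is correct and takes essentially the same route as the paper: the same reduction via Lemma~\ref{pullback} and Theorem~\ref{cohp1} to the emptiness of the lattice-point set $S$, followed by elementary integer arithmetic, and your case analysis (witness $(1,0,\dots,0)$ for $b\geq -1$, witness $(-b,0,\dots,0)$ for $b\leq -2$) does verify both directions. The only difference is cosmetic: the paper applies the shift $(m_1,\dots,m_n)\mapsto(m_1+b,m_2,\dots,m_n)$ to rewrite $S$ as $\{(m_1,\dots,m_n)\in\mathbb{Z}_{\geq 0}^n \mid b<m_1+\cdots+m_n<a\}$, which eliminates the sign-of-$b$ case split and the explicit witnesses altogether (and would also remove the false start with $m_2=b+1$ in your write-up).
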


\begin{proof}
     As in the proof of Theorem~\ref{nefness1}, we view $\mathbb{P}^n$ as the toric variety associated to $\Sigma$, we identify $\pi$ with the toric morphism $\pi\colon X_{\Sigma'}\to\mathbb{P}^n$ and we set  \[D\coloneqq (b-a)D_{u_0}+bD_{e_1}.\] 
     
     Hence, we wish to show that $H^1(X_{\Sigma'},\OO_{X_{\Sigma'}}(D))=0$ if and only if $a\leq 0$ or $a\leq b+1.$ Indeed, by Theorem~\ref{cohp1}, $\dim_{\mathbb{C}}H^1(X_{\Sigma'},\OO_{X_{\Sigma'}}(D))$ is equal to the cardinality of the set
     \[
         \big\{ (m_1,\dots,m_n)\in\mathbb{Z}^n\mid 0<m_1+\cdots+m_n<a-b, m_1\geq -b \textnormal{ and } m_i\geq 0 \ \ \forall i=2,\dots,n   \big\}\]
        which, via the assignment $(m_1,\dots,m_n)\mapsto (m_1+b,m_2,\dots,m_n)$, is in bijection with the set
         \[\big\{ (m_1,\dots,m_n)\in\mathbb{Z}^n\mid b<m_1+\cdots+m_n<a \textnormal{ and } m_i\geq 0 \ \ \forall i=1,\dots,n   \big\}. \] This set is empty if, and only if, $a\leq 0$ or $a\leq b+1.$
\end{proof}

\section{The blow-up of projective space at several points}\label{sevpts}
 We continue with our assumption that $n\geq 3$. Let $1\leq q\leq n$ be an integer, and let $X\to\mathbb{P}^n$ be a blow-up at $q+1$ distinct points. The main result of this section, Theorem~\ref{mainthmsev}, characterizes the isomorphism classes of invertible sheaves on $X$ with vanishing cohomology in degree $1$. As a criterion for cohomological vanishing, this is well known and elementary (cf. \cite[Theorem~1.3]{DP23}), so our emphasis lies in observing that the criterion is sharp.

 Let $\Sigma$ be the fan in $N_\mathbb{R}\cong\mathbb{R}^n$ introduced in Section~\ref{onept}, whose associated toric variety is $X_{\Sigma}\cong\mathbb{P}^n$. Recall that $\Sigma$ consists of the cones generated by all proper subsets of $\{e_0,\dots,e_n\}$, so its maximal cones are the cones  \[\sigma_i=\Cone(e_0,\dots,\widehat{e}_i,\dots,e_n)\subseteq\mathbb{R}^n\] for $i=0,\dots, n$. We now let $\Sigma'$ denote the fan obtained by successive star subdivisions along the cones $\sigma_0,\dots,\sigma_q$. Then $\Sigma'$ is a refinement of $\Sigma$, and the induced toric morphism
 
 \begin{equation}\label{toricblowsev}
     \pi\colon X_{\Sigma'}\to\mathbb{P}^n
 \end{equation} is the blow-up of $\mathbb{P}^n$ at the $q+1$ distinguished points $\gamma_{\sigma_0},\dots,\gamma_{\sigma_q}$.

 \begin{rmk}
     Since $\PGL_{n+1}$ acts transitively on tuples of $q+1$ points in $\mathbb{P}^n$, making an appropriate change of coordinates shows that any blow-up $X\to\mathbb{P}^n$ at $q+1$ points in $\mathbb{P}^n$ is of the form \eqref{toricblowsev}.
 \end{rmk}

 For each $0\leq i\leq q$, let $u_i\coloneqq -e_i$, and for each $0\leq j\leq n$ with $i\neq j$, let \[\tau_{ij}\coloneqq\Cone(u_i,e_0,\dots,\widehat{e}_i,\dots, \widehat{e}_j,\dots,e_n).\] The maximal cones of $\Sigma'$ are then given by these $\tau_{ij}$ and the cones $\sigma_{q+1},\dots,\sigma_n$, while the set of ray generators of $\Sigma'$ is given by $\{u_0,\dots,u_q,e_0,\dots,e_n\}$. For each ray generator $u$, let $D_u$ denote the toric divisor in $X_{\Sigma'}$ associated to $\Cone(u)$; for example, $D_{u_0},\dots, D_{u_q}$ are the exceptional prime divisors. We now compute an explicit formula for the pullback of toric divisors in $\mathbb{P}^n$ via the blow-up morphism $\pi\colon X_{\Sigma'}\to\mathbb{P}^n$.

 \begin{lem}\label{pullbacksev}
     Let $D=\lambda_0D_0+\cdots+\lambda_nD_n$ be a toric divisor in $\mathbb{P}^n$ and let $s\coloneqq \lambda_0+\cdots+\lambda_n$. Then, the toric divisor \[D'\coloneqq\lambda_0D_{e_0}+\cdots +\lambda_n D_{e_n}+(s-\lambda_0)D_{u_0}+\cdots +(s-\lambda_q)D_{u_q}\] in $X_{\Sigma'}$ satisfies $\OO_{X_{\Sigma'}}(D')\cong\pi^*\OO_{\mathbb{P}^n}(D)$.
 \end{lem}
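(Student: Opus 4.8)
The plan is to imitate the proof of Lemma~\ref{pullback}, with the single star subdivision replaced by the $q+1$ successive ones, and to apply Proposition~\ref{cartierpull} to the toric morphism $\pi$, whose underlying linear map $\overline{\pi}\colon N_\mathbb{R}\to N_\mathbb{R}$ is the identity, so that $\overline{\pi}^*=\id_M$; the description of the maximal cones $\tau_{ij}$, $\sigma_j$ (for $j>q$) and of the ray generators $u_0,\dots,u_q,e_0,\dots,e_n$ of $\Sigma'$ recalled just above the lemma will be taken as given. First I would record the Cartier data of $D$ on $\mathbb{P}^n$: since $\mathbb{P}^n$ is smooth, $D$ is Cartier, and exactly the computation carried out in Lemma~\ref{pullback} shows that, with $s=\lambda_0+\cdots+\lambda_n$, the characters $m_0\coloneqq(-\lambda_1,\dots,-\lambda_n)$ and $m_i\coloneqq(-\lambda_1,\dots,s-\lambda_i,\dots,-\lambda_n)$ (the $i$-th entry being $s-\lambda_i$) for $1\le i\le n$ satisfy $\langle m_i,e_j\rangle=-\lambda_j$ whenever $j\ne i$, so that $\{m_{\sigma_i}=m_i\mid 0\le i\le n\}$ is the Cartier data of $D$.

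The combinatorial step comes next: for each maximal cone of $\Sigma'$ I must exhibit a maximal cone of $\Sigma$ containing it. For $0\le i\le q$ and $j\ne i$, I would check that $\tau_{ij}\subseteq\sigma_i$, using $u_i=-e_i=\sum_{\ell\ne i}e_\ell\in\Cone(e_\ell\mid\ell\ne i)=\sigma_i$ and noting that the remaining generators $e_\ell$ ($\ell\ne i,j$) of $\tau_{ij}$ also generate $\sigma_i$; in fact $\sigma_i$ is the only maximal cone of $\Sigma$ containing $\tau_{ij}$, since $-e_i\notin\sigma_k$ for $k\ne i$. The untouched cones $\sigma_j$ with $j>q$ are their own containing cones. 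Feeding this into Proposition~\ref{cartierpull} shows $\pi^*\OO_{\mathbb{P}^n}(D)\cong\OO_{X_{\Sigma'}}(D'')$, where $D''$ is the toric Cartier divisor with Cartier data $m_{\tau_{ij}}=m_i$ (for $0\le i\le q$, $j\ne i$) and $m_{\sigma_j}=m_j$ (for $q<j\le n$).

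Finally I would read off the coefficients of $D''=\sum_\rho a_\rho D_\rho$ from the identities $\langle m_\sigma,u_\rho\rangle=-a_\rho$ for $\rho\in\sigma(1)$. For a ray generator $e_k$, choose (possible since $n\ge 3$ and $q\ge 1$) some $\tau_{ij}$ passing through $e_k$ with $i\ne k$; then $a_{e_k}=-\langle m_i,e_k\rangle=\lambda_k$, the case $k=0$ being handled via $e_0=-e_1-\cdots-e_n$. For an exceptional ray generator $u_i=-e_i$ with $0\le i\le q$, every $\tau_{ij}$ contains it, so $a_{u_i}=-\langle m_i,u_i\rangle=\langle m_i,e_i\rangle=s-\lambda_i$, which at $i=0$ reads $\langle m_0,e_0\rangle=\lambda_1+\cdots+\lambda_n=s-\lambda_0$. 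Assembling these coefficients gives $D''=D'$, completing the proof.

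The only real work is the bookkeeping in the middle step — checking that each $\tau_{ij}$ sits inside $\sigma_i$ and in no other maximal cone of $\Sigma$, and that every ray generator of $\Sigma'$ lies in at least one maximal cone whose character has been pinned down — but this rests solely on the relation $e_0+e_1+\cdots+e_n=0$ together with $n\ge 3$, and it is precisely what guarantees that the coefficient computation in the last step is consistent, i.e. independent of the maximal cone chosen through a given ray.
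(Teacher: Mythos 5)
Your proof is correct and takes essentially the same route as the paper's: you compute the Cartier data $m_0,\dots,m_n$ of $D$ exactly as in Lemma~\ref{pullback}, observe that each $\tau_{ij}$ lies in $\sigma_i$ (with the cones $\sigma_j$, $j>q$, untouched), apply Proposition~\ref{cartierpull} with $\overline{\pi}^*=\id_M$, and read off the coefficients to conclude $D''=D'$. The only difference is that the paper compresses the last step into the phrase ``direct computation,'' which you carry out explicitly; there is no gap.
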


 \begin{proof}
     As in the proof of Lemma~\ref{pullback}, let $m_0,\dots,m_n$ be the Cartier data of $D$. We note that, for every $0\leq i\leq q$ and $0\leq j\leq n$ with $i\neq j$, the smallest cone in $\Sigma$ containing $\tau_{ij}\in\Sigma'$ is $\sigma_i$. Then, by Proposition~\ref{cartierpull}, the Cartier divisor $D''$ on $X_{\Sigma'}$ whose Cartier data is given by 
     \[ m_{\sigma_i}\coloneqq m_i\] for $i=q+1,\dots,n$, and \[m_{\tau_{ij}}\coloneqq m_i\] for $0\leq i\leq q$, $0\leq j\leq n$ ($i\neq j$), satisfies $\OO_{X_{\Sigma'}}(D'')\cong\pi^*\OO_{\mathbb{P}^n}(D)$. Direct computation now shows that $D''=D'$.
 \end{proof}

 \begin{lem}\label{pathconnsev}
    Let $D=\sum_{\rho\in\Sigma'(1)} a_\rho D_\rho$ be any toric divisor on $X_{\Sigma'}$, and let $m\in M$ be a character. If \[V_{D,m}=\bigcup_{\sigma\in\Sigma'_{\max}}\Conv(u_\rho\mid\rho\in\sigma(1), \langle m,u_\rho\rangle<-a_\rho)\subseteq\mathbb{R}^n\] is not path-connected, then $V_{D,m}=\{e_i,u_i\}$ for some $0\leq i\leq q$, or $V_{D,m}\subseteq\{u_0,\dots,u_q\}$.
\end{lem}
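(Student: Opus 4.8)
The structure of this lemma mirrors Lemma~\ref{pathconn} from the one-point case, so the plan is to adapt that argument to the present fan $\Sigma'$. The key combinatorial fact about $\Sigma'$ is that its ray generators split into two groups: the ``old'' rays $e_0,\dots,e_n$ and the ``new'' exceptional rays $u_0,\dots,u_q$, where $u_i=-e_i$. The maximal cones are the $\tau_{ij}=\Cone(u_i,e_0,\dots,\widehat e_i,\dots,\widehat e_j,\dots,e_n)$ together with the surviving $\sigma_i=\Cone(e_0,\dots,\widehat e_i,\dots,e_n)$ for $q+1\le i\le n$. The crucial observations will be: (a) for $0\le i\le q$, the rays $e_i$ and $u_i$ lie in no common maximal cone (since every cone containing $u_i$ omits $e_i$); (b) any two ``old'' rays $e_k,e_\ell$ lie in a common maximal cone as long as $n\ge 3$ gives enough room to pick an index to subdivide against or a surviving $\sigma$; and (c) each $e_i$ with $0\le i\le q$ shares a maximal cone with every ray \emph{except} $u_i$, and each $e_i$ with $q+1\le i\le n$ shares a maximal cone with every ray whatsoever.

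\textbf{Step 1: eliminate the old rays one at a time.} Assume $V_{D,m}$ is not path-connected. Suppose first that some ``free'' old ray $e_i$ with $q+1\le i\le n$ lies in $V_{D,m}$. As in Lemma~\ref{pathconn}, any point $p\in V_{D,m}$ lies on a segment inside $V_{D,m}$ joining it to some ray generator $v$; since $e_i$ and $v$ always lie in a common maximal cone (here I use that $\sigma_i$ or some $\tau_{k\ell}$ contains both, which needs $n\ge 3$), the segment $[e_i,v]$ lies in $V_{D,m}$, hence $p$ is joined to $e_i$ by a path in $V_{D,m}$, contradicting disconnectedness. So no such $e_i$ is in $V_{D,m}$. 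Next suppose $e_i\in V_{D,m}$ for some $0\le i\le q$. The same argument works provided the connecting ray generator $v$ is not $u_i$ — and $e_i$ shares a maximal cone with every $e_j$ ($j\ne i$), with every $u_j$ ($j\ne i$), and with $e_i$'s own exceptional-free complement; the only obstruction is $v=u_i$. So if $u_i\notin V_{D,m}$, we again get a contradiction. Hence: for each $0\le i\le q$, if $e_i\in V_{D,m}$ then $u_i\in V_{D,m}$ as well, and in that case (since $e_i,u_i$ share no maximal cone, while every other point of $V_{D,m}$ connects to one of them) $V_{D,m}$ must be exactly $\{e_i,u_i\}$ — any third ray generator would connect to both $e_i$ and $u_i$, restoring connectedness.

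\textbf{Step 2: the remaining case.} If we are not in the situation $V_{D,m}=\{e_i,u_i\}$, then by Step~1 no old ray $e_0,\dots,e_n$ lies in $V_{D,m}$, so $V_{D,m}\subseteq\{u_0,\dots,u_q\}$, which is the second alternative of the lemma. This completes the argument. The one point requiring genuine care — and the main obstacle — is Step~1's bookkeeping: verifying that, for each pair consisting of an old ray $e_i$ and an arbitrary ray generator $v$, there really is a maximal cone of $\Sigma'$ containing both (with the sole exception $\{e_i,u_i\}$ for $0\le i\le q$), and that this survives the $q+1$ successive star subdivisions. This is where the hypothesis $n\ge 3$ and the range $q\le n$ are used: one needs a free index to form a suitable $\tau_{k\ell}$ or a surviving $\sigma_k$. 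I would organize this as a short case analysis on the type of $v$ (namely $v=e_j$ with $j$ old and either $\le q$ or $>q$, versus $v=u_j$), exactly paralleling the ``$e_i,e_j\in\tau_k$'' and ``$e_0,e_i\in\sigma_j$, $u_0,e_i\in\tau_j$'' casework in the proof of Lemma~\ref{pathconn}.
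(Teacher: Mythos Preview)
Your proposal is correct and follows essentially the same route as the paper's own proof: assume $V_{D,m}$ is disconnected, show that the presence of any $e_i$ forces either a contradiction (via the shared-cone argument, using $n\ge 3$) or the configuration $\{e_i,u_i\}$, and otherwise conclude $V_{D,m}\subseteq\{u_0,\dots,u_q\}$. Your explicit separation of the cases $i>q$ and $0\le i\le q$ is in fact slightly cleaner than the paper's treatment, which handles both at once and glosses over the fact that $u_i$ does not exist for $i>q$.
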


\begin{proof}
    Recall that $\Sigma'_{\max}=\{\sigma_{q+1},\dots,\sigma_n,\tau_{ij}\mid 0\leq i\leq q, \ 0\leq j\leq n, \ i\neq j\}$ and $\Sigma'(1)=\{\Cone(u_0),\dots,\Cone(u_q),\Cone(e_0),\dots,\Cone(e_n)\}$. Let us assume that $V_{D,m}$ is not path-connected.

    Suppose that $e_i\in V_{D,m}$ for some $0\leq i\leq n$. Since $n\geq 3$, given any ray generator $v\neq u_i$ we can find a maximal cone $\sigma\in\Sigma'_{\max}$ containing both $e_i$ and~$v$, which implies that, if $v\in V_{D,m}$, then there is a path in $V_{D,m}$ joining $e_i$ and~$v$. Since $V_{D,m}$ is assumed not to be path-connected, we deduce that $u_i\in V_{D,m}$. But then, similarly, for any ray generator $v\neq e_i$ we can find a maximal cone containing both $u_i$ and $v$, so $v\in V_{D,m}$ would imply that $V_{D,m}$ is path-connected. Therefore $V_{D,m}=\{e_i,u_i\}$.

    Now suppose that $e_0,\dots, e_q\notin V_{D,m}$. Since, for any $0\leq i,j\leq q$ with $i\neq j$, there is no maximal cone containing both $u_i$ and $u_j$, we conclude that $V_{D,m}\subseteq\{u_0,\dots,u_q\}$.
\end{proof}

Let $X\to\mathbb{P}^n$ be a blow-up at $q+1$ points, with exceptional prime divisors $E_0,\dots, E_q$. The Picard group of $X$ is generated by the classes of $E_0,\dots,E_q$ and by $\pi^*\mathcal{O}_{\mathbb{P}^n}(1)$, so the following theorem characterizes the isomorphism classes of invertible sheaves on $X$ with vanishing cohomology in degree $1$.
 
  \begin{thm}\label{mainthmsev}
    Let $a_0,\dots,a_q,b\in\mathbb{Z}$. Then,
    \[H^1\big(X,\OO_X\big(-\sum_{i=0}^qa_iE_i\big)\otimes\pi^*\OO_{\mathbb{P}^n}(b)\big)=0\] if and only if the inequalities
    \[
        a_i+a_j\leq b+1 \ \ \forall i,j\in\{0,\dots,q\}, i\neq j
    \]are satisfied, unless there is exactly one element $a_k\in\{a_0,\dots,a_q\}$ that is positive, in which case we also require $a_k\leq b+1$.
\end{thm}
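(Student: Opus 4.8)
The plan is to mirror the proof of Corollary~\ref{char1} step for step, with Lemma~\ref{pullbacksev} and Lemma~\ref{pathconnsev} in the roles of Lemma~\ref{pullback} and Lemma~\ref{pathconn}. First I would identify $X$ with the toric variety $X_{\Sigma'}$ and $\pi$ with the toric blow-up \eqref{toricblowsev}, which is legitimate since $\PGL_{n+1}$ acts transitively on $(q+1)$-tuples of points; by the same token I may assume, after relabelling, that $a_0\ge a_1\ge\cdots\ge a_q$. Writing $E_i=D_{u_i}$ and representing $\OO_{\mathbb{P}^n}(1)$ by the toric hyperplane $D_0$, Lemma~\ref{pullbacksev} gives
\[
    \OO_X\Big(-\sum_{i=0}^q a_iE_i\Big)\otimes\pi^*\OO_{\mathbb{P}^n}(b)\;\cong\;\OO_{X_{\Sigma'}}(D),\qquad D=-a_0D_{u_0}+\sum_{i=1}^q(b-a_i)D_{u_i}+bD_{e_0},
\]
so by Theorem~\ref{torcoh} the question reduces to the lattice-point count
\[
    \dim_{\mathbb C}H^1(X_{\Sigma'},\OO_{X_{\Sigma'}}(D))=\sum_{m\in M}\dim_{\mathbb C}\widetilde{H}^0(V_{D,m},\mathbb C).
\]
By Lemma~\ref{pathconnsev} the summand vanishes unless $V_{D,m}=\{e_k,u_k\}$ for some $0\le k\le q$ (contributing $1$) or $V_{D,m}$ is a subset of $\{u_0,\dots,u_q\}$ of cardinality $r\ge2$ (contributing $r-1$). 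Hence $\dim_{\mathbb C}H^1=N_A+N_B$, where $N_A$ counts the characters $m$ of the first type and $N_B$ is the weighted count of those of the second type, and $H^1=0$ if and only if $N_A=N_B=0$.

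Next I would spell out the conditions defining $V_{D,m}$. For $m=(m_1,\dots,m_n)\in M$ one has $\langle m,e_i\rangle=m_i$ and $\langle m,u_i\rangle=-m_i$ for $1\le i\le n$, while $\langle m,e_0\rangle=-(m_1+\cdots+m_n)$ and $\langle m,u_0\rangle=m_1+\cdots+m_n$; comparing with the coefficients of $D$ turns each statement ``$e_i$ (resp.\ $u_i$) lies in $V_{D,m}$'' into a linear inequality in the $m_i$ --- for instance $u_i\in V_{D,m}$ iff $m_i>b-a_i$ for $1\le i\le q$, and $u_0\in V_{D,m}$ iff $m_1+\cdots+m_n<a_0$. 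The equalities $N_A=0$ and $N_B=0$ then become, respectively: (i) for each $0\le k\le q$, the explicit list of inequalities cutting out ``$V_{D,m}=\{e_k,u_k\}$'' has no solution $m\in\mathbb Z^n$; and (ii) there is no $m\in\mathbb Z^n$ with $m_i\ge0$ for all $1\le i\le n$ and $m_1+\cdots+m_n\le b$ at which at least two of $u_0,\dots,u_q$ lie in $V_{D,m}$. Each is settled by an elementary count: when $q<n$ the coordinates $m_{q+1},\dots,m_n$ are unbounded above, so $m_1+\cdots+m_n$ sweeps out an explicit interval of integers and one reads off when it meets the region imposed by the remaining inequalities; when $q=n$ that sum instead ranges over a bounded box. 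Carrying this out, (ii) is unsolvable exactly when no two positive $a_i,a_j$ with $i\ne j$ satisfy $a_i+a_j\ge b+2$, which is the family of inequalities $a_i+a_j\le b+1$; and (i) is unsolvable for every $k$ unless a single $a_k$ is positive, in which case it becomes solvable precisely when $a_k\ge b+2$, so requiring its insolvability is exactly the additional hypothesis $a_k\le b+1$.

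The step I expect to be the main obstacle is this last piece of bookkeeping: organizing the finitely many sign regimes for $b$ and the $a_i$ so that the non-existence of integer solutions to (i) and (ii) distils precisely into the stated dichotomy, and in particular verifying that ``exactly one $a_k$ positive'' is the only configuration in which system (i) can produce a disconnection while (ii) stays empty. Secondary care is needed for the boundary case $q=n$, in which every $e_i$ is paired with a ray $u_i=-e_i$ so that the free-coordinate count must be replaced by the bounded-box count, and for reconciling the deliberately asymmetric divisor $D$ with the symmetric shape of the conclusion --- cleanest if the $a_i$ have been reordered at the outset. Once this scaffolding is in place, each individual count is of exactly the elementary flavour of the proof of Corollary~\ref{char1}.
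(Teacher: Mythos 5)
Your route is the paper's own: the same toric model, the same divisor $D$ produced by Lemma~\ref{pullbacksev}, the reduction through Theorem~\ref{torcoh} and Lemma~\ref{pathconnsev}, and then integer solvability of the two families of inequality systems. The genuine gap is that the step you defer as ``the last piece of bookkeeping'' is exactly where the content of the theorem lies, and the summary claims you make about its outcome are not correct as stated. First, ``no two positive $a_i,a_j$ with $i\ne j$ satisfy $a_i+a_j\ge b+2$'' is not the same condition as ``$a_i+a_j\le b+1$ for all $i\ne j$'': for $a_0=\cdots=a_q=0$ and $b=-2$ the former holds while the latter fails, so the identification you assert silently changes the criterion precisely in the delicate regime where $b$ is negative and the $a_i$ are not positive. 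Second, system (i) can be solvable even when several $a_i$ are positive: for $q<n$ it has solutions whenever $a_k\ge b+2$ and $a_j\le b$ for all $j\ne k$ (e.g.\ $b=5$, $a_k=10$, all other $a_j=1$), so its solvability is not governed by ``exactly one $a_k$ positive''. Third, for $b<0$ two positive entries with $a_i+a_j\ge b+2$ do not make system (ii) solvable, since $m_l\ge 0$ for all $l$ together with $m_1+\cdots+m_n\le b$ is already contradictory (e.g.\ $b=-1$, $a_i=a_j=1$); solvability of (ii) in fact forces $b\ge 0$ and both entries positive. The correct distillation therefore involves sign conditions on $b$ and on the individual $a_i$, plus the bounded-box analysis when $q=n$ that you mention but do not carry out, and it is not the routine repetition of Corollary~\ref{char1} that your sketch suggests.

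There is also a structural problem with your equivalence ``$H^1=0$ iff $N_A=N_B=0$'' as you define the counts, i.e.\ by requiring $V_{D,m}$ to be \emph{exactly} $\{e_k,u_k\}$ or a subset of $\{u_0,\dots,u_q\}$. Sets of the form $\{e_k,u_k\}\cup\{u_j\}$ with $j\ne k$ are also disconnected (a segment together with the isolated point $u_k$), and they do occur: for $q=1$, $a_0=a_1=1$, $b=-1$, neither of your two systems has an integer solution, yet $m=0$ gives $V_{D,m}=\{e_0,u_0,u_1\}$, which has two connected components, and indeed $H^1\cong\mathbb{C}^2$, as one also sees from $0\to L\to\pi^*\OO_{\mathbb{P}^n}(-1)\to\OO_{E_0}\oplus\OO_{E_1}\to 0$ with $L=\OO_X(-E_0-E_1)\otimes\pi^*\OO_{\mathbb{P}^n}(-1)$. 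So before the solvability analysis you must either enlarge the list of disconnected configurations you count or replace the exact-shape conditions by membership conditions such as ``$e_k,u_k\in V_{D,m}$''; as written, the reduction ``$H^1=0$ iff $N_A=N_B=0$'' fails, and the subsequent bookkeeping inherits that failure.
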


\begin{proof}
    We view $\mathbb{P}^n$ as the toric variety associated to the fan $\Sigma$, and we identify $\pi$ with the toric morphism $\pi\colon X_{\Sigma'}\to\mathbb{P}^n$ by choosing coordinates so that the blown-up points match the distinguished points $\gamma_{\sigma_0},\dots,\gamma_{\sigma_q}$.

    By Lemma~\ref{pullbacksev}, we have \[\pi^*\mathcal{O}_{\mathbb{P}^n}(b)\cong\pi^*\mathcal{O}_{\mathbb{P}^n}(bD_0)\cong\mathcal{O}_{X_{\Sigma'}}(bD_{e_0}+bD_{u_1}+\cdots +bD_{u_q}),\] so we are interested in the cohomology of the divisor \[D\coloneqq bD_{e_0}-a_0D_{u_0}+(b-a_1)D_{u_1}+\cdots+(b-a_q)D_{u_q}.\] By Theorem~\ref{torcoh}, we have $H^1(X_{\Sigma'},\mathcal{O}_{X_\Sigma'}(D))=0$ if and only if $V_{D,m}$ is path-connected for every character $m\in M$. In view of Lemma~\ref{pathconnsev}, we must thus investigate under which conditions there is a character $m\in M$ such that $V_{D,m}$ is of the form $\{e_i,u_i\}$, or it is a subset of $\{u_0,\dots,u_q\}$ with at least two elements.

    Let $i\in\{1,\dots,q\}$. From the definition of the set $V_{D,m}$, and recalling that $u_j=-e_j$ for every $j\in\{0,\dots,q\}$, we see that $V_{D,m}=\{e_i,u_i\}$ if and only if the inequalities
    \begin{align*}
        a_0\leq \langle m,u_0\rangle&\leq b,\\
        b-a_i <\langle m,e_i\rangle &< 0,\\
        \langle m,e_j\rangle &\leq b-a_j \ \forall j\in\{0,\dots,q\}\setminus\{i\},\\
        0\leq \langle m,e_j\rangle & \ \ \ \ \ \ \ \ \ \ \ \  \ \forall j\in\{0,\dots,n\}\setminus\{i\}
    \end{align*} are satisfied. Notice that the above inequalities imply $b-a_i <\langle m,u_0\rangle\leq b.$ We thus find that $V_{D,m}\neq\{e_i,u_i\}$ for all $m\in M$ if and only if $a_i\leq 0$, or $a_i\leq b+1$, or $b+1\leq a_j$ for some $j\neq i$. Similarly, we find that the same statement holds for $i=0$.

    Next, let $i,j\in\{1,\dots,q\}$ with $i\neq j$. We note that $V_{D,m}$ is a subset of $\{u_0,\dots, u_q\}$ containing the elements $u_i,u_j$, if and only if the inequalities
    \begin{align*}
        \langle m,u_i\rangle &< a_i-b,\\
        \langle m,u_j\rangle &< a_j-b,\\
        \langle m,e_0\rangle &\geq -b,\\
        \langle m,e_k\rangle &\geq 0 \ \forall k\in\{1,\dots,n\}
    \end{align*} are satisfied. These inequalities admit no solution $m\in M$ if and only if $a_i+a_j\leq b+1$.

    So, suppose that $H^1(X_{\Sigma'},\mathcal{O}_{X_\Sigma'}(D))=0$, i.e., that $V_{D,m}$ is path-connected for every $m\in M$. We readily deduce that $a_i+a_j\leq b+1$ for all $i\neq j$ by the above observation, so we need only analyze the case that there is exactly one positive element $a_k$ in the set $\{a_0,\dots,a_q\}$. We then have $a_j<a_j+a_k\leq b+1$ for every $j\neq k$, so the fact that $V_{D,m}\neq\{e_k,u_k\}$ implies that $a_k\leq b+1$.

    Conversely, suppose that $a_i+a_j\leq b+1$ for all $i\neq j$ and, in case there is exactly one positive element $a_k\in\{a_0,\dots,a_q\}$, suppose also that $a_k\leq b+1$. In particular, we know that $V_{D,m}$ is not a subset of $\{u_0,\dots, u_q\}$ containing at least two elements, so it only remains to show that $V_{D,m}$ is not of the form $\{e_i,u_i\}$. We then finish our argument by swiftly analyzing the following three cases:
    \begin{enumerate}
        \item [(i)] If none of the elements in $\{a_0,\dots, a_q\}$ is positive, then the inequality $b-a_i < b$ is false for every $i=0,\dots, q$ and so $V_{D,m}$ is not of the form $\{e_i,u_i\}$.
        \item [(ii)] If there is exactly one positive element $a_k\in\{a_0,\dots,a_q\}$, then $a_i<a_i+a_k\leq b+1$, so $V_{D,m}$ is not of the form $\{e_i,u_i\}$ for $i\neq k$, and furthermore the assumption $a_k\leq b+1$ implies that $V_{D,m}\neq\{e_k,u_k\}$.
        \item [(iii)] If there are at least two positive elements in $\{a_0,\dots,a_q\}$, then $a_i<b+1$ and $V_{D,m}\neq\{e_i,u_i\}$ for every $i=0,\dots,q$.
    \end{enumerate} Therefore $H^1(X_{\Sigma'},\mathcal{O}_{X_\Sigma'}(D))=0$.
\end{proof}

\newpage


\end{document}